\DeclareMathOperator{\Gr}{Gr}
\newcommand{\Addresses}{{
		\bigskip
		\footnotesize
		
		Humboldt-Universit\"at zu Berlin, Institut f\"ur Mathematik, Rudower Chausee 25
		\hfill \newline\texttt{}
		\indent 12489 Berlin, Germany} 
	\par\nopagebreak
	\textit{E-mail address}: \texttt{andreibud95@protonmail.com}
}
\theoremstyle{plain}
\newtheorem{trm}{Theorem}[section]
\newtheorem{prop}[trm]{Proposition}
\theoremstyle{definition}
\newtheorem{rmk}[trm]{Remark}
\newtheorem{conj}[trm]{Conjecture}
\begin{document}
	\title{Prym enumerative geometry and a Hurwitz divisor in $\overline{\mathcal{R}}_{2i}$} 
	\author{Andrei Bud}
	\begin{abstract}  For $i\geq2$, we compute the first coefficients of the class $[\overline{D}(\mu;3)]$ in the rational Picard group of the moduli of Prym curves $\overline{\mathcal{R}}_{2i}$, where $D(\mu;3)$ is the divisor parametrizing pairs $[C,\eta]$ for which there exists a degree $2i$ map $\pi\colon C\rightarrow \mathbb{P}^1$ having ramification profile $(2,\ldots,2)$ above two points $q_1, q_2$, a triple ramification somewhere else and satisfying $\mathcal{O}_C(\frac{\pi^{*}(q_1)-\pi^{*}(q_2)}{2})\cong \eta$. Furthermore, we provide several new Prym enumerative results related to this situation. 
	\end{abstract}
\maketitle
\section{Introduction} \label{sec1}   

The moduli space $\mathcal{R}_g$ of smooth Prym curves, parametrizing pairs $[C,\eta]$ where $[C]\in \mathcal{M}_g$ and $\eta$ is a 2-torsion line bundle on $C$, received considerable attention following the influential papers \cite{MumfordPrym} and \cite{Beau77} of Mumford and Beauville. The algebraic theory of Prym curves developed by Mumford, together with the modular interpretation of $\mathcal{R}_g$ provided by Beauville laid the foundation for an algebraic geometric study of Prym curves. Through this perspective, the associated map $\mathcal{P}_g\colon\mathcal{R}_g\rightarrow \mathcal{A}_{g-1}$ to the moduli space of principally polarized Abelian varieties was used to provide an algebraic proof of the Schottky-Jung relations, see \cite{MumfordPrym}, and to understand the birational geometry of the moduli of Prym varieties, see \cite{FarLud}, \cite{Bruns}, \cite{FarVerNikulin}, \cite{FarR13} and the references therein.  

An important problem regarding this moduli space is understanding its birational geometry. As such, we recognize the important role played by Hurwitz divisors in proving that $\mathcal{M}_g$ is of general type when $g\geq 24$, see \cite{KodMg}, \cite{KodevenHarris1984} , \cite{EisenbudHarrisg>23} and that $\mathcal{R}_g$ is of general type when $g\geq 13, g\neq 16$, see \cite{FarLud},\cite{Bruns} and \cite{FarR13}. Along with those, several other Hurwitz divisors appear in the literature in \cite{Diaz}, \cite{VGeerKouvid}, \cite{FarkasFermat} and \cite{Bud-adm}. Our goal is to consider a new Hurwitz divisor $\overline{D}(\mu;3) \subseteq \overline{\mathcal{R}}_{2i}$ and compute some of its coefficients in the Picard group $\mathrm{Pic}_\mathbb{Q}(\overline{\mathcal{R}}_{2i})$, where $\overline{\mathcal{R}}_{2i}$ denotes the compactification of $\mathcal{R}_{2i}$ appearing in \cite{Casa} and \cite{FarLud}. 

 To compute the coefficients of our divisor, we will require several new enumerative results, counting pencils with some given ramification profiles on a generic Prym curve. As such, we first extend on the work done in \cite{Bud-adm} in order to provide such results for elliptic curves. Furthermore, for a generic Prym curve $[C,\eta]$ in $\mathcal{R}_g$, we will provide some Prym analogues of the enumerative results in \cite{KodMg} and \cite{KodevenHarris1984}. Drawing a parallel with the situation for $\overline{\mathcal{M}}_g$ where a plethora of enumerative results is required to compute the classes of Hurwitz divisors, we expect our results to be useful in the study of cycles on $\overline{\mathcal{R}}_g$.
	
For $g=2i$ and the length $2i$ partition $\mu = (2,2,\ldots,2, -2,-2,\ldots,-2)$ of 0, we consider $H_{\mu;3}$ to be the Hurwitz scheme parametrizing up to isomorphism degree $2i$ maps $\pi\colon X \rightarrow \mathbb{P}^1$ having ramification profiles $(2,\ldots, 2)$, $(2,\ldots,2)$ and $(3,1,1, \ldots,1)$ over three branch points $q_1$, $q_2$ and $q_3$ and is otherwise simply ramified. This scheme admits a compactification using admissible covers, see \cite{KodMg}, \cite{Diaz} and \cite{AbramovichCV}, which we denote $\overline{H}_{\mu;3}$. We can define a map 
\[ c_{\mu;3} \colon H_{\mu;3} \rightarrow \overline{\mathcal{R}}_{2i}\] 
sending $[\pi\colon X \rightarrow \Gamma]$ to $[X, \mathcal{O}_X(\frac{\pi^{*}(q_1)-\pi^{*}(q_2)}{2})]$. Using the existence and unicity of a twist as in \cite{FP18}, this map can be extended over admissible covers $[\pi\colon X \rightarrow \Gamma]$ where $X$ is a curve of compact type. As in \cite{Bud-adm}, we can extend this over admissible covers $[\pi\colon X \rightarrow \Gamma]$, where $X$ stabilizes to a curve $[C/{x\sim y}]$ with $[C,x]\in \mathcal{M}_{2i-1,1}$ generic. 

The image of the map $c_{\mu;3}$ is a divisor, which we denote by $\overline{D}(\mu;3)$. We compute some of its coefficients in $\mathrm{Pic}_{\mathbb{Q}}(\overline{\mathcal{R}}_{2i})$ with respect to the standard basis formed by $\lambda$ and the classes of the boundary divisors: 
\begin{trm}\label{divisor} The first coefficients of the class of $\overline{D}(\mu;3)$ in $\mathrm{Pic}_\mathbb{Q}(\overline{\mathcal{R}}_{2i})$ are given by the formula 
\[ \overline{D}(\mu;3) \equiv (6i-4)!\cdot\binom{2i-1}{i}\cdot\left(a\lambda-b_0'\delta_0'-b_0''\delta_0''-b_0^{\mathrm{ram}}\delta_0^{\mathrm{ram}}-b_1\delta_1-b_{g-1}\delta_{g-1}-b_{1:g-1}\delta_{1: g-1}-\cdots\right)\]
where $b_1 = 48i^2-4i+4$, $b_{g-1} = 36i+12$, $b_{1:g-1} =  12i+12$, $a=36i+96+\frac{36}{2i-1}$ and moreover 
\[b_0' = 6i+9+\frac{3}{2i-1}, \ \ b_0'' = 12i^2+2i+7+\frac{3}{2i-1}, \ \ b_0^{\mathrm{ram}}= 6i+18+\frac{6}{2i-1} \]
	
\end{trm}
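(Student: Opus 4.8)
The plan is to determine the seven coefficients $a,b_0',b_0'',b_0^{\mathrm{ram}},b_1,b_{g-1},b_{1:g-1}$ by intersecting $\overline{D}(\mu;3)$ with a small number of test curves $R\subset\overline{\mathcal{R}}_{2i}$ whose intersection numbers against $\lambda$ and the boundary divisors occurring in the statement are already understood, and which avoid the boundary strata $\delta_j,\delta_{g-j},\delta_{j:g-j}$ with $j\ge 2$, so that the unknown higher coefficients never enter. Each such $R$ produces one linear relation $R\cdot\overline{D}(\mu;3)=a\,(R\cdot\lambda)-\sum b_\bullet\,(R\cdot\delta_\bullet)$ in which the left-hand side is a count of limit admissible covers while the right-hand side is elementary; the coefficients can equivalently be extracted by pulling $\lambda$ and the $\delta_\bullet$ back to $\overline{H}_{\mu;3}$ and expressing them through its tautological classes, but I will describe the test-curve version. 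The overall factor $(6i-4)!\cdot\binom{2i-1}{i}$ is the Hurwitz number of degree-$2i$ covers $C\to\mathbb{P}^1$ with ramification profiles $(2,\dots,2),(2,\dots,2),(3,1,\dots,1)$ over three fixed points and simple ramification over $6i-4$ further general points (the number $6i-4$ being forced by Riemann--Hurwitz); since $\overline{D}(\mu;3)=\tfrac{1}{\deg c_{\mu;3}}(c_{\mu;3})_*[\overline{H}_{\mu;3}]$ with $\deg c_{\mu;3}$ generically $1$, this number appears as a common factor in every intersection number $R\cdot\overline{D}(\mu;3)$ and is pulled out once and for all; I would compute it separately by Schubert calculus on a Grassmannian, or equivalently by counting factorisations of the identity in $S_{2i}$, as one of the required enumerative results.

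For the $\lambda$-coefficient $a$ and the $\delta_0$-coefficients I would use a general pencil of Prym curves, not contained in $\overline{D}(\mu;3)$ — for instance obtained by blowing up the base points of a Lefschetz pencil of genus-$2i$ curves on a surface carrying a nonzero $2$-torsion class, which induces a $2$-torsion bundle on each member. The intersection numbers of this curve with $\lambda$ and with $\delta_0'+\delta_0''+2\delta_0^{\mathrm{ram}}$ (the pull-back of $\delta_0$ from $\overline{\mathcal{M}}_{2i}$) are given by the standard surface and Riemann--Hurwitz formulas, and its intersection with $\overline{D}(\mu;3)$ is computed fibrewise: over a general fibre by the Hurwitz count above, and over each singular fibre by analysing the limiting admissible cover together with its canonical twist in the sense of \cite{FP18}. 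To separate $b_0'$, $b_0''$ and $b_0^{\mathrm{ram}}$ I would add three test curves lying respectively in $\delta_0'$, $\delta_0''$ and $\delta_0^{\mathrm{ram}}$ — obtained by gluing a fixed and a moving point on a general curve of genus $2i-1$ carrying a nontrivial $2$-torsion bundle, carrying the trivial bundle with nontrivial gluing data, and carrying the ramification twist supported on an exceptional $\mathbb{P}^1$, respectively. Each has zero intersection with $\lambda$ and isolates its coefficient, the intersection with $\overline{D}(\mu;3)$ being a count of limit admissible covers on the normalisation; together with the pencil this pins down $a,b_0',b_0''$ and $b_0^{\mathrm{ram}}$.

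For $b_1,b_{g-1}$ and $b_{1:g-1}$ I would degenerate to elliptic tails. Fixing a general pointed curve $(X,x)$ of genus $2i-1$ with a $2$-torsion bundle $\eta_X$, letting an elliptic Prym curve $(E,o,\eta_E)$ vary over a base and gluing $x\sim o$, one obtains — according to the three ways of distributing nontriviality of the $2$-torsion, namely on $E$ only, on $X$ only, or on both — test curves whose general members lie in $\delta_1$, $\delta_{g-1}$ and $\delta_{1:g-1}$ respectively; these have zero intersection with $\lambda$ and meet the remaining boundary divisors only along previously analysed loci, so each isolates its coefficient once the $\delta_0$-coefficients are known. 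The intersection with $\overline{D}(\mu;3)$ is computed by counting admissible covers of the degenerate target $\Gamma_1\cup\Gamma_2$, which factors into a count on the genus-$(2i-1)$ side and one on the elliptic side, summed over all compatible distributions of the branch points $q_1,q_2,q_3$ and of the $6i-4$ simple branch points and over all compatible ramification profiles over the node. The genus-$(2i-1)$ counts are the Prym analogues — for curves with trivial and with nontrivial $2$-torsion — of the Castelnuovo--Hurwitz numbers of \cite{KodMg} and \cite{KodevenHarris1984}, while the genus-$1$ counts are obtained by extending the admissible-cover techniques of \cite{Bud-adm} to elliptic curves; these are precisely the new enumerative results announced in the introduction.

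The main obstacle is the enumerative input, not the intersection-theoretic bookkeeping. Two points are delicate. First, for a general member of a pencil of Prym curves one must count the covers of the prescribed profile whose associated Prym line bundle $\mathcal{O}_C\!\left(\tfrac{\pi^*(q_1)-\pi^*(q_2)}{2}\right)$ equals a prescribed $2$-torsion class, and — more subtly — determine the multiplicities with which such covers specialise into each boundary divisor of $\overline{\mathcal{R}}_{2i}$; this requires following the limit of the line bundle through the twisting construction and is most delicate along $\delta_0^{\mathrm{ram}}$, where the forgetful map $\overline{\mathcal{R}}_{2i}\to\overline{\mathcal{M}}_{2i}$ is ramified. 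Second, the genus-$1$ enumeration requires counting covers of an elliptic tail with prescribed ramification and prescribed induced $2$-torsion, for which the methods of \cite{Bud-adm} must be adapted. Once these numbers are available, the seven coefficients follow by solving the resulting linear system.
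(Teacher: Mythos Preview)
Your overall strategy --- intersect with test curves avoiding the higher boundary divisors, count admissible covers on each, solve the resulting linear system --- is exactly what the paper does. But several concrete details in your proposal are wrong, and your choice of test curves would make the enumeration harder than it needs to be.

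First, the map $c_{\mu;3}$ is \emph{not} generically one-to-one. The paper computes $\deg(c_{\mu;3})=24(i-1)\binom{2i-1}{i}$ (Proposition~\ref{tripleram}) for the Hurwitz scheme with unordered simple branch points; the factor $(6i-4)!$ in the theorem is there because the paper works with \emph{ordered} simple branch points, and the factor $\binom{2i-1}{i}$ is simply a convenient normalisation extracted after the intersection numbers are computed --- it is not a Hurwitz number in its own right, and the class being computed is $(c_{\mu;3})_*[\overline{H}_{\mu;3}]$, not the reduced divisor.

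Second, your description of the elliptic-tail test curves is inconsistent. If you vary the elliptic curve $(E,o,\eta_E)$ with $(X,x)$ fixed (the $A$-curves in the paper), the intersection with $\lambda$ is \emph{not} zero --- it is $1$ or $3$ depending on the distribution of the torsion. What \emph{is} true for those curves is that their intersection with $\overline{D}(\mu;3)$ vanishes, which gives three free linear relations among $a$ and the $b$'s without any enumeration at all. The curves with vanishing $\lambda$-intersection are the $C^{g-1}$-curves, where one instead fixes $(E,y,\eta_E)$ and moves the attachment point along the genus-$(g-1)$ curve; these isolate $b_1,b_{g-1},b_{1:g-1}$ directly, and here the enumerative input is needed. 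The paper uses both families.

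Third, your Lefschetz-pencil idea is not what the paper does and would be substantially harder: you would have to compute the number of covers with prescribed Prym bundle on a moving smooth curve, which is precisely the quantity the paper is trying to determine. The paper sidesteps this entirely: the $A$-curve relations pin down $a$ in terms of the $b$'s, the $C^{g-1}$-curves give $b_1,b_{g-1},b_{1:g-1}$, and then a single $B'$-curve (your ``fixed-to-moving-point'' curve in $\delta_0'$) closes the system; $b_0''$ and $b_0^{\mathrm{ram}}$ fall out of the $A$-relations rather than from separate $B''$ or $B^{\mathrm{ram}}$ computations.
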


The proof of Theorem \ref{divisor} is via intersection with test curves and as such, we will require several enumerative results. For this, we will study in Section \ref{ellipticsection} the enumerative geometry of a general elliptic curve. Furthermore, we will provide in Proposition \ref{tripleram}, Proposition \ref{2prym} and Proposition \ref{prymunram} several enumerative results computing the number of maps to $\mathbb{P}^1$ satisfying certain ramification conditions. Regarding such enumerative results on $\overline{\mathcal{M}}_g$, the case where above each branch point there is a unique point of ramification was fully solved in \cite{Lian19}. We will provide in Proposition \ref{enumerativeMg} a count for a case where there are two ramification points above a branch point. 

We recall that there is a bijective correspondence between smooth Prym curves $[C,\eta]$ and \'etale double covers $[\widetilde{C}\rightarrow C]$. This correspondence allows us to define a map $\chi_g\colon\mathcal{R}_g$ $\rightarrow$ $\mathcal{M}_{2g-1}$ sending $[C,\eta]$ to the associated $\widetilde{C}$. The pencils we count in Proposition \ref{tripleram} can be used to show that, when $g=2i-1$, the image $\mathrm{Im}(\chi_g)$ is contained in the Hurwitz divisor $\mathfrak{TR}_{2i}$ studied by Farkas in \cite{FarkasFermat}. In fact, our approach can be adapted as to show that $\mathrm{Im}(\chi_g)$ sits on many other Hurwitz divisors, for both odd and even $g$. 

Finally in Section \ref{sec-intwithtestcurv} we describe the test curves we are going to use, while in Section \ref{finalsection} we compute the required intersections. The proof of Theorem \ref*{divisor} is an immediate consequence of these computations.

\textbf{Acknowledgements:} I am grateful to my advisor Gavril Farkas and to Johannes Schmitt for their insights. I am also thankful to Carlos Maestro P\'erez and Carl Lian for the helpful conversations on the topic of this paper.

\section{Enumerative geometry on the generic elliptic curve} \label{ellipticsection}

To prove Theorem \ref{divisor}, we will intersect the divisor $\overline{D}(\mu;3)$ with various test curves. We will provide the necessary enumerative results that will help us understand the admissible covers in $\overline{H}_{\mu;3}$ appearing above such intersections. In this section we will concentrate on the enumerative geometry of a generic pointed elliptic curve. We start with a result that is similar to Proposition 2.5 and Proposition 2.6 in \cite{Bud-adm}.
\subsection{A Hurwitz space over elliptic curves}
Let $k\geq 2$ and let $b_1 = (2k-1,1)$, $b_2 = b_3 = (2,\ldots,2)$ and $b_4 = (3,1,\ldots,1)$ be partitions of $2k$. We denote $B = \left\{b_1,b_2,b_3,b_4\right\}$ and consider $\overline{H}_{2k,B}$ the Hurwitz scheme parametrizing admissible covers $\pi\colon X \rightarrow \Gamma$ having ramification profile $b_i$ over a point $q_i$ for $i = \overline{1,4}$ and otherwise unramified. We consider the map 
\[ \pi_k\colon \overline{H}_{2k,B}\rightarrow \overline{\mathcal{M}}_{1,1}\] 
remembering only the point of order $2k-1$ over $q_1$ (and stabilizing the pointed source curve if necessary). 

\begin{prop} \label{elliptic}The degree of $\pi_k$ is $12$. 
\end{prop}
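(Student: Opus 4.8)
The plan is to compute the degree of $\pi_k$ by specialization, following the strategy used for Propositions 2.5 and 2.6 of \cite{Bud-adm}. The morphism $\pi_k$ is non-constant --- it is dominant, as the general admissible cover in $\overline{H}_{2k,B}$ has a general pointed elliptic curve as source --- and both $\overline{H}_{2k,B}$ and $\overline{\mathcal{M}}_{1,1}$ are proper and one-dimensional, so $\pi_k$ is finite and its degree can be read off over any point of $\overline{\mathcal{M}}_{1,1}$. I would work over the boundary point $[E_0,p_0]$, where $E_0$ is the $1$-nodal rational curve with a general smooth marked point $p_0$, and count the admissible covers lying above it, each with its local multiplicity for $\pi_k$.

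The first task is to pin down the shape of the admissible covers $[\pi\colon X\to\Gamma]$ for which $(X,p)$ stabilizes to $(E_0,p_0)$. Here $X$ cannot be smooth of genus $1$, so $\Gamma$ must be reducible; being a stable $4$-pointed rational curve, $\Gamma=\Gamma'\cup_n\Gamma''$ has exactly two components, each carrying two of the $q_i$, with matching ramification over the node $n$ imposed by admissibility. Writing $X=X'\cup X''$ for the preimages of the two components, a Riemann--Hurwitz count on each side combined with the requirement that $X$ have arithmetic genus $1$ and stabilize to the nodal cubic --- so that $X$ carries a single cycle which, after the branch markings are forgotten, becomes a single non-separating node --- forces the contributing covers to be of banana type: the fibre over $n$ consists of two points with matching ramification indices $(a,2k-a)$, and $X'\to\Gamma'$, $X''\to\Gamma''$ are connected rational covers meeting precisely at those two points, with $p$ lying on one of them. (The potentially more degenerate configurations, where one side is a disconnected cover with extra rational tails sprouting off, must also be enumerated; several are excluded by Riemann--Hurwitz --- for $k\geq 2$ there is no connected genus-$0$ cover of $\mathbb{P}^1$ branched only over $n$ and over a single $q_i$ --- and the remaining ones are treated in the same way, their contracting tails not affecting the stabilization.)

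It then remains to enumerate. Up to the symmetry exchanging $q_2$ and $q_3$ there are only a few ways to split the four branch points $2+2$ between $\Gamma'$ and $\Gamma''$, with $q_1$ --- whose fibre has profile $(2k-1,1)$ --- lying on the component containing $p$. For each such split I would determine the admissible profiles $(a,2k-a)$ over $n$: a profile $(2,\ldots,2)$ on a side forces $(k,k)$ over $n$ and forces that side to be connected of genus $0$, whereas $(2k-1,1)$ severely restricts the side containing $q_1$. On each side the rational covers with the three prescribed ramification profiles are counted by the monodromy recipe --- tuples of permutations in $S_{2k}$ of the prescribed cycle types whose product is the identity and which generate a transitive subgroup, taken up to simultaneous conjugation --- and the global covers are assembled by gluing fibres over $n$ with matching profile, each weighted by the local multiplicity of $\pi_k$, which over the node is governed by the ramification indices $a$ and $2k-a$ together with the gluing data over the two points above $n$. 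Summing these contributions over all combinatorial types yields $12$.

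The main obstacle is the combinatorial bookkeeping: deciding which splits of the $q_i$ give nonempty, connected, genuinely banana-type covers, and computing the multiplicities without misplacing factors of $2$ arising from the automorphisms of the banana and the identifications at the node. To keep this under control I would first carry out the computation by hand in the base case $k=2$, where the covers have degree $4$ and the monodromy bookkeeping in $S_4$ is entirely explicit, and I would match the conventions used against those of \cite{Bud-adm}.
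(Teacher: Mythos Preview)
Your approach is essentially the same as the paper's: specialize to the nodal cubic $[E_\infty,p]\in\overline{\mathcal{M}}_{1,1}$, classify the admissible covers above it by how $q_1,\ldots,q_4$ split between the two target components, and sum the contributions with their local multiplicities. The paper carries this out explicitly in three cases (pairing $q_1$ with $q_4$, with $q_2$, and with $q_3$, the latter two being symmetric), each contributing $4$, whereas your proposal only outlines the strategy without executing the count; note also that in several of the actual cases the fibre over the node has more than two points (with extra contracting rational tails, as you anticipate), so the pure banana picture with profile $(a,2k-a)$ is only the skeleton of the covers rather than the whole story.
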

\begin{proof} We proceed as in \cite{Bud-adm} Propositions 2.5 and 2.6. We consider the singular pointed curve $[E_\infty,p]$ of $\overline{\mathcal{M}}_{1,1}$ and we compute the length of $\pi_k^{*}([E_\infty,p])$, which we know is equal to the degree of the map.
	
Let $\pi\colon X\rightarrow \Gamma$ be an admissible cover mapped by $\pi_k$ to $[E_\infty,p]$. We denote by $R$ the rational component of $X$ mapping to $E_\infty$ and by $R_1$ the component collapsing to the node of the curve $E_\infty$. We denote by $u$ and $v$ the two nodes where $R$ and $R_1$ are glued together. It follows that $R$ contains the point $p$ of ramification order $2k-1$ over $q_1$. For the target curve $\Gamma$, we denote by $\mathbb{P}_1$ the target of $R$, by $\mathbb{P}_2$ the target of $R_1$ and by $q$ the node. Finally, we denote by $f$ and $f_1$ the restriction of $\pi$ to $R$ and $R_1$ respectively. Depending on the position of the points $q_1,\ldots,q_4$ we distinguish three different cases. 

\textbf{Case I:} The points $q_1$ and $q_4$ are on $\mathbb{P}_1$. In this case, the degree of $f$ is either $2k-1$ or $2k$. 

If $\mathrm{deg}(f) = 2k-1$, the Riemann-Hurwitz theorem implies that there are $3$ points on $R$ over $q$. It follows that the curve $X$ is of the form 

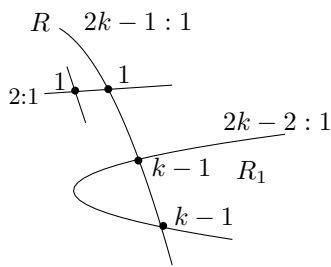
\begin{figure}[H]\centering 
	\begin{tikzpicture}
	\draw [domain=0.3:1.8] plot ({\x},{sqrt{4*\x-\x*\x} - 2*\x});
	\draw [domain=-2:5] plot ({\x*\x/5-\x/2+0.8)},{\x/5-2.5});
	\draw [domain=0.6:2.3] plot ({\x-0.5}, {\x/15-1}); 
	
	\draw [domain=1.2:1.45] plot ({\x-0.8}, {-3*\x+3});
	\node[left = 1mm of {(0.4,0)}] {$R$};
	\node[right = 1mm of {(0.4,0)}] {$2k-1:1$};
	\node[left = 1mm of {(3.3,-2)}] {$R_1$};
	\node[above = 1mm of {(3.2,-1.7)}] {$2k-2:1$};
	\fill (0.95,-0.9) circle[radius=1.5pt];
	\fill (0.51,-0.92) circle[radius=1.5pt];
	\node[left = 1mm of {(0.62,-0.8)}] {$1$};
	\fill (1.68,-2.72) circle[radius=1.5pt];
	\node[right = 1mm of {(1.6,-2.6)}] {$k-1$};
	\fill (1.35,-1.85) circle[radius=1.5pt];
	\node[right = 1mm of {(1.3,-1.95)}] {$k-1$};
	\node[right = 1mm of { (0.85,-0.7)}] {$1$};
	\node[right = 1mm of {(-0.6,-1)}] {$\text{\small 2:1}$};
	
\end{tikzpicture} \caption{A curve $X$ corresponding to an admissible cover in $\overline{H}_{2k,B}$ over $[E_\infty,p]$ }
\end{figure} 
Here $f$ is $2k-1$ to $1$ and has ramification profiles $(2k-1), (3,1,\ldots,1)$ and $(k-1,k-1,1)$ over $q_1,q_4$ and $q$. The same combinatorial argument as in Proposition 2.4 in \cite{Bud-adm} implies that the map $f$ is unique (up to automorphisms of $\mathbb{P}^1$). The map $f_1$ is also unique, $2k-2$ to $1$, with ramification profiles $(2,2,\ldots,2)$, $(2,\ldots,2)$ and $(k-1,k-1)$ over the points $q_2, q_3$ and $q$. Such a map was described in \cite{Bud-adm}, proof of Proposition 2.5 and we know it has $2k-2$ automorphisms. Looking at the local description of the Hurwitz scheme in the neighbourhood of the admissible cover $\pi\colon X \rightarrow \mathbb{P}^1$ we deduce that it should be counted with multiplicity $2$. 

If $\mathrm{deg}(f) = 2k$, the only points above $q$ are $u$ and $v$. The table of Proposition 2.4 in \cite{Bud-adm} implies that both $u$ and $v$ are points of ramification order $k$. In this case $f$ has ramification profiles $(2k-1,1)$, $(3,1,\ldots,1)$ and $(k,k)$ and a combinatorial argument would imply that it is unique up to an automorphism of $\mathbb{P}^1$. The map $f_1$ is again unique,  with ramification profiles $(2,2,\ldots,2)$, $(2,\ldots,2)$ and $(k,k)$ over the points $q_2, q_3$ and $q$. We get again that such an admissible cover appears with multiplicity $2$. 

As a consequence, the contribution to the length coming from \textbf{Case I} is $2+2 = 4$. 

\textbf{Case II:} The points $q_1$ and $q_2$ are on $\mathbb{P}_1$. This implies $\mathrm{deg}(f) = 2k$. The Riemann-Hurwitz theorem implies there are $k$ points on $R$ over $q$. Hence we get that $X$ is of the form 
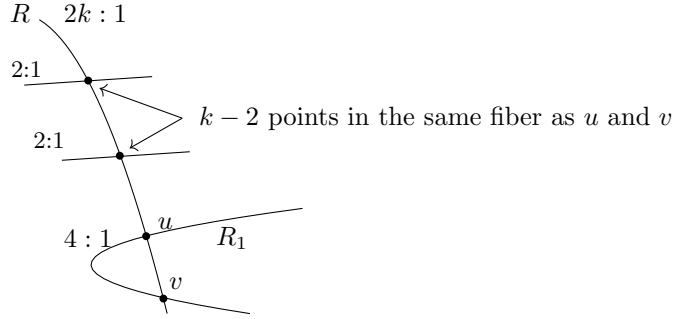
\begin{figure}[H] \centering 
	\begin{tikzpicture}
	\draw [domain=0.3:2] plot ({\x},{sqrt{4*\x-\x*\x} - 2*\x});
	\draw [domain=-2:5] plot ({\x*\x/5-\x/2+1.3)},{\x/5-3.6});
	\draw [domain=0.6:2.3] plot ({\x}, {\x/15-2});
	\draw [domain=0.6:2.3] plot ({\x-0.5}, {\x/15-1}); 
	
	\node[left = 1mm of {(0.4,0)}] {$R$};
	\node[right = 1mm of {(0.4,0)}] {$2k:1$};
	\node[left = 1mm of {(3.3,-3)}] {$R_1$};
	
	\fill (0.95,-0.9)  circle[radius=1.5pt];
	\fill (1.37,-1.9) circle[radius=1.5pt];
	\fill (1.72,-2.97) circle[radius=1.5pt];
	\node[right = 1mm of {(1.65,-2.8)}] {$u$};
	\fill (1.95,-3.8)  circle[radius=1.5pt];
	\node[right = 1mm of {(1.8,-3.6)}] {$v$};
	\draw [->]  (2.2, -1.4) -- (1.5,-1.8) ;
	\draw [->]  (2.2, -1.4) -- (1.1,-1) ;
	\node[right = 1mm of {(-0.3,-0.75)}] {$\text{\small 2:1}$};
	\node[right = 1mm of {(0,-1.7)}] {$\text{\small 2:1}$};
	\node[right = 1mm of {(2.2,-1.4)}] {$k-2$ points in the same fiber as $u$ and $v$};
	\node[right = 1mm of {(0.4,-3)}] {$4:1$};
	\end{tikzpicture} \caption{A curve $X$ corresponding to an admissible cover in Case II }
\end{figure}
The map $f$ has ramification profiles $(2k-1,1)$, $(2,\ldots,2)$ and $(1,3,2,\ldots,2)$ over $q_1, q_2$ and $q$, with $u$ and $v$ the points of orders $1$ and $3$. Proposition 2.4 in \cite{Bud-adm} implies that such a map is unique. Similarly the map $f_1$ having ramification profiles $(3,1), (3,1)$ and $(2,2)$ is unique. It is immediately checked that such an admissible cover should be counted with multiplicity $1+3 = 4$. 

As a consequence, the contribution from \textbf{Case II} is $4$. 
The third case when $q_1$ and $q_3$ are on $\mathbb{P}_1$ is identical to the second one, and hence we get another contribution of $4$. 

Consequently $\mathrm{deg}(\pi_k) = 12$.  
\end{proof}
The same method can be used to prove the following result for $2$-pointed elliptic curves:
\begin{prop} \label{elliptic2points}
	Let $[E,y,z] \in \mathcal{M}_{1,2}$ generic. The number of degree $2i-2s$ maps $\pi\colon E \rightarrow \mathbb{P}^1$ having ramification profiles $b_1 = (2i-2s-1,1), b_2 = b_3 = (2,\ldots,2)$ and $b_4 = b_5 = (2,1,\ldots,1)$ over $q_1,\ldots,q_5$ with $y$ and $z$ the ramified points over $q_1$ and $q_4$ is equal to $6$ if $s=i-1$ and $12$ when $s<i-1$. 
\end{prop}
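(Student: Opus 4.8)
The plan is to adapt the degeneration argument used to prove Proposition~\ref{elliptic} (and Propositions~2.4--2.6 of \cite{Bud-adm}). Set $d=2i-2s$, put $B=\{b_1,\dots,b_5\}$, and let $\overline{H}_{d,B}$ be the Hurwitz scheme of admissible covers $\pi\colon X\to\Gamma$ with ramification profile $b_j$ over $q_j$ for $j=\overline{1,5}$ and unramified elsewhere. There is a natural morphism
\[\phi\colon \overline{H}_{d,B}\longrightarrow \overline{\mathcal{M}}_{1,2}\]
sending $[\pi]$ to the stabilization of $(X,y,z)$, where $y$ is the point over $q_1$ of ramification order $d-1$ and $z$ is the ramification point over $q_4$ (when $d=2$ the profile $b_1=(1,1)$ is trivial, and $y$ is just a chosen preimage of $q_1$). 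Since $[E,y,z]$ is generic, the number to be computed is $\deg\phi$, so it suffices to evaluate the length of a single fibre. Following the proof of Proposition~\ref{elliptic}, I would specialize to $[E_\infty,y,z]\in\overline{\mathcal{M}}_{1,2}$ with $E_\infty$ a nodal cubic and $y,z$ two general smooth points on it, check (as a byproduct of the enumeration) that $\phi^{-1}([E_\infty,y,z])$ is finite so that its length equals $\deg\phi$, and then enumerate the admissible covers in it with multiplicity.

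As in Proposition~\ref{elliptic}, every such admissible cover has a source $X$ consisting of a distinguished rational component $R$ carrying $y$ and $z$ and meeting the rest of $X$ in two points $u,v$ glued to the node of $E_\infty$, together with a collection of rational components that collapse under stabilization; correspondingly the target $\Gamma$ breaks into a main component $\mathbb{P}_1$ (the image of $R$) and further rational components, and the branch points $q_1,q_4$ below $y,z$ lie on (or bubble off) $\mathbb{P}_1$. The enumeration then proceeds as there: first distribute the remaining branch points $q_2,q_3,q_5$ among the components of $\Gamma$; then, for each distribution, use Riemann--Hurwitz on $R$ (which has genus $0$) to pin down $\deg(R\to\mathbb{P}_1)$ and the ramification profile over the node $q=\mathbb{P}_1\cap\mathbb{P}_2$, which in turn fixes the shape of the collapsing components; next count the maps $R\to\mathbb{P}_1$ and the maps on the collapsing components of each combinatorial type, which is immediate from the rigidity statement of Proposition~2.4 in \cite{Bud-adm} and the genus-$0$ Hurwitz counts used in the proofs of Propositions~2.5--2.6 there; and finally weight each admissible cover by the local multiplicity of $\phi$, which accounts both for the ramification indices over the nodes of $\Gamma$ and for the chains of rational components collapsing to the node of $E_\infty$, exactly as reflected in the multiplicities ``$2$'' and ``$1+3$'' in Proposition~\ref{elliptic}. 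Summing these contributions should give $12$ for every $s<i-1$, the count being independent of $d$ in this range for the same reason it is independent of $k$ in Proposition~\ref{elliptic}.

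For $s=i-1$ we have $d=2$: here $b_1=(1,1)$ is trivial, $q_1$ is no longer a branch point, and $q_2,q_3,q_4,q_5$ all carry the profile $(2)$. Every component of $X$ lying over a component of $\Gamma$ is then a degree-$2$ cover of $\mathbb{P}^1$, unique once its two branch points are fixed, so the enumeration reduces to a short combinatorial count: Riemann--Hurwitz on $R$ forces exactly one of $q_2,q_3,q_5$ onto $\mathbb{P}_1$, giving three admissible covers, and each occurs with local multiplicity $2$ because the node of $E_\infty$ is the image of a bridge (so $t_{E_\infty}=t^2$ in the relevant smoothing coordinate), hence a total of $6$. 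The main obstacle is the bookkeeping in the $d\ge4$ case — deciding which of the several distributions of $q_2,q_3,q_5$ actually support an admissible cover, treating the Riemann--Hurwitz subcases for $\deg(R\to\mathbb{P}_1)$, and assigning to each cover the correct local multiplicity; the underlying genus-$0$ Hurwitz numbers are supplied by \cite{Bud-adm}.
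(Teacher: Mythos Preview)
Your framework is sound, and the $s=i-1$ computation is correct, but you have chosen a different degeneration point than the paper does, and this makes the general case harder than it needs to be.

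The paper does \emph{not} specialize to $[E_\infty,y,z]$ with a nodal cubic. Instead it takes the point $[E\cup_x\mathbb{P}^1,y,z]$ in the other boundary divisor of $\overline{\mathcal{M}}_{1,2}$, where $[E,x]\in\mathcal{M}_{1,1}$ is a generic \emph{smooth} elliptic curve and $y,z$ sit on a rational tail. Over this point there are only two combinatorial types of admissible cover, distinguished by the ramification order at the node ($2i-2s$ or $2i-2s-2$); in each case the restriction to the rational tail is the unique genus-$0$ map with the obvious profile, and the restriction to $E$ is exactly a map of the type already counted in Proposition~2.5 of \cite{Bud-adm} (six such maps). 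All local multiplicities are $1$, so $\deg\phi=6+6=12$ with essentially no bookkeeping.

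Your degeneration to the nodal cubic parallels Proposition~\ref{elliptic} more closely, but now there are five branch points to distribute rather than four, and the cases proliferate: several placements of $q_2,q_3,q_5$ relative to $\mathbb{P}_1$ have to be analysed separately, each with its own Riemann--Hurwitz constraint on $R$, its own genus-$0$ Hurwitz count, and its own local multiplicity (some equal to $1$, some to $2$, some to $1+3$, depending on the ramification over $q$). This can certainly be carried out and will give $12$, but it is substantially longer than the paper's two-line reduction to \cite{Bud-adm}. The moral is that when one of the needed enumerative inputs (here, the count of maps from a smooth elliptic curve with a total ramification point) is already available, it pays to degenerate toward a smooth elliptic component rather than all the way to the nodal cubic.
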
 

\begin{proof} This proposition is clear for $s=i-1$. When $s< i-1$ we take $\overline{H}_{2i-2s, B}$ the Hurwitz scheme parametrizing admissible covers with ramification profiles given by $B= \left\{b_1,\ldots, b_5\right\}$. We take the map 
\[ \pi_{2i-2s, B} \colon \overline{H}_{2i-2s, B}\rightarrow \overline{\mathcal{M}}_{1,2}\]
remembering the points of ramification order $2i-2s-1$ and the ramification point over $q_4$. To compute the degree of this map, we look at its fiber above a point $[E\cup_{x}\mathbb{P}^1,y,z]$ where $[E,x] \in \mathcal{M}_{1,1}$ is generic and $y,z \in \mathbb{P}^1$. We have two possibilities for an admissible cover $\pi\colon X\rightarrow \Gamma$ above such a point. 

1. The ramification order at the node is $2i-2s$. Then the restriction of $\pi$ to the rational component is the unique map having ramification profiles $(2i-2s)$, $(2i-2s-1,1)$,$(2,1,\ldots,1)$ above $q, q_1, q_4$, while the restriction to the elliptic component has degree $2i-2s$ and ramification profiles $(2i-2s), (2,\ldots,2), (2,\ldots,2)$ and $(2,1,\ldots,1)$. 

2. The ramification order at the node is $2i-2s-2$. Then the restriction of $\pi$ is the unique map of degree $2i-2s-1$ having ramification profiles $(2i-2s-1)$, $(2,1,\ldots,1)$, $(2i-2s-2,1)$ above $q_1, q_4$ and $q$. Moreover the restriction to the elliptic component has degree $2i-2s-2$ and ramification profiles $(2i-2s-2), (2,\ldots,2), (2,\ldots,2)$ and $(2,1,\ldots,1)$. 

Because all admissible covers appear with multiplicity $1$, we conclude using Proposition 2.5 in \cite{Bud-adm} that $\mathrm{deg}(\pi_{2i-2s, B}) = 12$.
\end{proof}

Lastly, we have:
\begin{prop} \label{ellipticunramified2pts}
	Let $[E,y,z] \in \mathcal{M}_{1,2}$ generic and $0\leq s\leq i-2$. The number of degree $2i-2s$ maps $\pi\colon E \rightarrow \mathbb{P}^1$ with ramification profiles $b_1 = (2i-2s-1,1), b_2 = b_3 = (2,\ldots, 2)$, $b_4=b_5=(2,1,\ldots,1)$ over $q_1,\ldots,q_5$ with $y$ the ramified point over $q_1$ and $z$ an unramified point over $q_4$ is equal to $12(2i-2s-1)$.
\end{prop}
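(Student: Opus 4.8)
The plan is to mimic the degeneration argument used in the proofs of Proposition \ref{elliptic} and Proposition \ref{elliptic2points}: form the Hurwitz scheme $\overline{H}_{2i-2s,B}$ of admissible covers with ramification profiles $B = \{b_1,\ldots,b_5\}$, equip it with the map $\pi_{2i-2s,B}\colon \overline{H}_{2i-2s,B}\to \overline{\mathcal{M}}_{1,3}$ remembering the point of ramification order $2i-2s-1$ over $q_1$, the ramified point over $q_4$, \emph{and} the additional unramified point over $q_4$ that we are now tracking; and then compute the degree of this map by specializing the elliptic curve. Concretely I would look at the fiber over a boundary point $[E\cup_x \mathbb{P}^1,\, y,\, z,\, w]$ where $[E,x]\in\mathcal{M}_{1,1}$ is generic and $y,z,w\in\mathbb{P}^1$, chosen so that $y$ maps to $q_1$, while $z$ and $w$ both lie over $q_4$ (with $z$ the ramified point and $w$ an unramified one). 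This forces the component of the target $\Gamma$ containing $q_1$ and $q_4$ to also carry the node $q$, exactly as in case analysis of Proposition \ref{elliptic2points}.

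The key steps, in order: first, classify the admissible covers $\pi\colon X\to\Gamma$ over this boundary point according to the ramification order at the node. As in the previous proof there are exactly two possibilities — ramification order $2i-2s$ at the node (so the rational component carries a degree $2i-2s$ cover with profiles $(2i-2s)$, $(2i-2s-1,1)$, $(2,1,\ldots,1)$ over $q$, $q_1$, $q_4$, and the elliptic component carries a degree $2i-2s$ cover with profiles $(2i-2s)$, $(2,\ldots,2)$, $(2,\ldots,2)$, $(2,1,\ldots,1)$), or ramification order $2i-2s-2$ at the node (rational component of degree $2i-2s-1$ with profiles $(2i-2s-1)$, $(2,1,\ldots,1)$, $(2i-2s-2,1)$, elliptic component of degree $2i-2s-2$ with the analogous four profiles). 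Each such configuration contributes, by Proposition \ref{elliptic2points} and the uniqueness statements for the genus-zero pieces from Proposition 2.4 in \cite{Bud-adm}, a total count of $12$ if we forget $w$; so the remaining task is purely combinatorial: for each of these covers, count the number of ways to choose the extra marked point $w$ on $X$ lying in the fiber over $q_4$ other than $z$, i.e. count the unramified preimages of $q_4$. Since the profile over $q_4$ is $(2,1,\ldots,1)$ on a degree $2i-2s$ curve, there are $2i-2s-2$ such points on the source of the original cover; but here I must track where these points sit after the degeneration — on the rational component, on the elliptic component, or at the node — and check that the count of placements is $2i-2s-1$ in total (the extra $+1$ over the naive $2i-2s-2$ coming from the node, which in the degenerate picture becomes an admissible location for $w$ in one of the two cases). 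Multiplying $12$ by $2i-2s-1$ would then give $12(2i-2s-1)$, and invariance of the degree under specialization finishes the proof.

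I would then double-check multiplicities: in Proposition \ref{elliptic2points} all admissible covers appeared with multiplicity $1$, and the same local analysis of the Hurwitz scheme applies verbatim here since adding an unramified marked point does not change the deformation theory of the cover — so no multiplicity-$2$ corrections of the kind seen in Case I of Proposition \ref{elliptic} arise. The conclusion is that $\deg(\pi_{2i-2s,B}) = 12(2i-2s-1)$, which is exactly the asserted number.

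The main obstacle I anticipate is the bookkeeping for the position of the extra point $w$: one has to argue carefully that exactly one of the two node-ramification cases admits $w$ "at the node" (contributing the $+1$), or equivalently give a clean count showing that across both cases the number of valid $(w)$-placements, weighted by the degree-$12$ subcount, produces the clean factor $2i-2s-1$ rather than $2i-2s-2$. Getting this edge case right — and in particular verifying it does not secretly depend on the parity or on whether $s$ is close to $i-2$ — is where the real work lies; the rest is a routine adaptation of the preceding two proofs.
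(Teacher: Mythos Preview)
There is a genuine gap in your setup. The Hurwitz space you describe --- admissible covers with profiles $B=\{b_1,\ldots,b_5\}$ together with a choice of unramified preimage of $q_4$ --- has dimension $2$ (five branch points modulo $\mathrm{PGL}_2$, and the unramified marking is a finite choice), whereas $\overline{\mathcal{M}}_{1,3}$ has dimension $3$. Your proposed map to $\overline{\mathcal{M}}_{1,3}$ is therefore not dominant, let alone generically finite, and speaking of its ``degree'' makes no sense. The paper instead maps this same $2$-dimensional Hurwitz stack to $\overline{\mathcal{M}}_{1,2}$, remembering only the point of order $2i-2s-1$ over $q_1$ and the marked \emph{unramified} point over $q_4$ --- and crucially \emph{forgetting} the ramified point over $q_4$. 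It is the degree of that map which equals the count in the statement.

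This dimension error also undermines your plan to factor the answer as $12\times(2i-2s-1)$ via Proposition~\ref{elliptic2points}. The $12$ there is the degree of the map tracking the \emph{ramified} point over $q_4$; over the boundary point $[E\cup_x\mathbb{P}^1,y,z]$ the admissible covers in the fiber of the correct map are those for which the \emph{unramified} marking lands on $\mathbb{P}^1$ after stabilization, which is a different constraint --- in particular the ramified point over $q_4$ is now free to sit on the elliptic component or on a collapsed rational tail, so the covers you would enumerate are not the same set. The paper's case split is accordingly not the one you propose: it distinguishes whether the unramified marked point $z$ lies on the non-collapsing rational component or on a component that collapses under stabilization, and obtains contributions $6\cdot(4i-4s-5)$ and $6\cdot 3$ respectively. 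These do sum to $12(2i-2s-1)$, but that factorization is an a posteriori arithmetic identity, not a structural consequence of Proposition~\ref{elliptic2points}, and your suggested ``$+1$ from the node'' has no counterpart in the actual argument.
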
 
\begin{proof}
  We consider the Hurwitz stack $\overline{H}_{2i-2s,B,1}$ parametrizing the pairs of an admissible cover $\pi\colon E \rightarrow \mathbb{P}^1$ having ramification profiles $b_1,\ldots, b_5$ over $q_1,\ldots, q_5$, together with an unramified point $x$ over $q_4$. We consider the map 
\[ \pi_{2i-2s, B,1}\colon \overline{H}_{2i-2s, B,1} \rightarrow \overline{\mathcal{M}}_{1,2}\]
remembering only the ramified point over $q_1$ and the point $x$. 

We consider again $[E\cup_{x}\mathbb{P}^1,y,z]$ and we compute the length of $\pi^{*}_{2i-2s,B,1}([E\cup_{x}\mathbb{P}^1,y,z])$. There are two possibilities for an admissible cover above such a point: either $z$ is on the rational component that does not collapse when we stabilize, or $z$ is on a rational component that gets collapsed in the stabilization. For the first possibility we get a contribution of $6\cdot(4i-4s-5)$ to the length of the cycle, while for the second one we get a contribution of $6\cdot3$.
Adding up the contributions we get the desired conclusion.
\end{proof}
\section{Enumerative geometry on the generic curve} \label{genericcurvesection}
\begin{prop} \label{2i-2s,3}Let $[C] \in \mathcal{M}_{2i-1}$ a generic curve. The number of pencils $L \in W^1_{2i-s}(C)$ satisfying the conditions 
\[ h^0(C, L(-3x)) = h^0(C, L(-(2i-2s)y)) = 1 \] for some distinct points $x, y \in C$ is equal to
\[ 8(i-s-1)(2i-2s-1)(2i-2s+1)\binom{2i-1}{s} + 8 (i-s+1)(2i-2s-1)(2i-2s+1)\binom{2i-1}{s-1} + \cdots\]
\[\cdots +8(s-1)(2i-2s+2)(2i-2s-1)(2i-2s+1)\binom{2i-1}{s-1} + 8s(2i-2s)(2i-2s-1)(2i-2s+1)\binom{2i-1}{s}+\cdots\]
\[\cdots + 8(s+1)(2i-2s-2)(2i-2s-1)(2i-2s+1)\binom{2i-1}{s+1}\]
\end{prop}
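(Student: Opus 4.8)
The plan is to realize the number in question as the degree of a forgetful map from a Hurwitz scheme of admissible covers to $\overline{\mathcal{M}}_{2i-1}$. Indeed, for $[C]\in\mathcal{M}_{2i-1}$ general a pencil $L$ as in the statement must be base-point free, since a $g^1_{2i-s}$ with a base point varies in dimension at most $2i-2s-2$, strictly below $\rho=\rho(2i-1,1,2i-s)=2i-2s-1$; hence $L$ induces a degree $2i-s$ cover $\pi_L\colon C\rightarrow\mathbb{P}^1$ whose fibre through $x$ contains $3x$ and whose fibre through $y$ contains $(2i-2s)y$, and conversely this cover recovers $L$. So the first step is to introduce the Hurwitz scheme $\overline{H}$ of degree $2i-s$ admissible covers $\pi\colon X\rightarrow\Gamma$ with ramification profile $(3,1,\ldots,1)$ over a branch point $q_1$, profile $(2i-2s,1,\ldots,1)$ over a branch point $q_2$, and simple ramification over the remaining branch points — whose number, $6i-5$, is forced by Riemann--Hurwitz, the total ramification being $2(2i-s)+2(2i-1)-2=8i-2s-4$, of which $q_1$ and $q_2$ account for $2$ and $2i-2s-1$. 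Forgetting all but the stabilized source gives $\overline{H}\rightarrow\overline{\mathcal{M}}_{2i-1}$, and since $\rho=2i-2s-1=1+(2i-2s-2)$ equals the combined codimension of the triple-ramification condition and the ramification-$(2i-2s)$ condition, this map is generically finite; its degree over $[C]$ is exactly the quantity to be computed.

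To evaluate this degree I would specialize $[C]$ to a reducible stable curve over which the fibre of $\overline{H}\rightarrow\overline{\mathcal{M}}_{2i-1}$ can be analysed componentwise, in the spirit of Section \ref{ellipticsection}: concretely, degenerate $C$ to a curve obtained from a rational curve $R$ by attaching $2i-1$ general elliptic tails $E_1,\ldots,E_{2i-1}$ at general points (equivalently, peel off one general elliptic tail at a time). Over such a degeneration the target also breaks up, and the discrete data attached to a limiting admissible cover are: on which components of $X$ the triple ramification point $x$ and the point $y$ of ramification $2i-2s$ lie, and how the ``excess'' part of the fibre over $q_2$ — which has total degree $s$ — is distributed among the components mapping onto the elliptic pieces. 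The choice of which elliptic tails receive a nontrivial piece of this fibre is what produces the binomial factors $\binom{2i-1}{s}$ and $\binom{2i-1}{s\pm1}$ in the formula, the shift by $\pm1$ occurring precisely when the tail carrying $x$ or $y$ also carries part of the excess.

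For each such combinatorial type one then computes the contribution to the length of the fibre as a product: the restriction of the cover to an elliptic component is enumerated by Proposition \ref{elliptic}, Proposition \ref{elliptic2points} or Proposition \ref{ellipticunramified2pts} (producing the factors $12$, $6$, $12(2i-2s-1)$, and so on); the restriction to the rational backbone $R$ is determined up to finitely many choices by the prescribed ramification at the nodes, and counting those maps of $\mathbb{P}^1$ yields the remaining polynomial factors such as $(2i-2s-1)$, $(2i-2s+1)$, $(i-s\pm1)$ and $s(2i-2s)$; and, finally, each boundary admissible cover must be weighted by the local multiplicity of $\overline{H}$ at it, which, as in the proof of Proposition \ref{elliptic}, is read off from the ramification indices at the nodes. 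Summing these contributions over all combinatorial types gives the stated expression. The main obstacle is exactly this bookkeeping: enumerating completely the ways in which the two distinguished ramification points and the degree $s$ of excess can be spread over the components, and keeping track of the local multiplicities — this is where the terms abbreviated by ``$\cdots$'' arise and where the argument requires the most care.
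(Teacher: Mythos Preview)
Your overall strategy—realize the count as the degree of a forgetful map from a Hurwitz space and compute that degree by degenerating the source curve—is sound and is indeed what underlies the paper's proof. But the paper does not run the degeneration itself: it invokes Proposition~5.4 of \cite{Lian19} as a black box. Lian's formula expresses the answer as a sum, over the ways of distributing the marked points $x,y$ and the simple branch points into $g$ ``boxes'' (one per elliptic component of the degenerate curve), of a product of a Schubert intersection number on $\Gr(2,2i-s+1)$ with Hurwitz-type numbers $N_{a,b,c,\ldots}$ attached to each box. The paper then just splits into the cases where $x,y$ lie in the same box or in different boxes, evaluates the Schubert integrals via Example~14.7.11 of \cite{Fulton-Intersection}, plugs in $N_{2,2,2,2}=6$, $N_{k,k,2,2}=2(k^2-1)$ and the values of $N_{2i-2s\pm1,2i-2s,3,2}$ from \cite{KodevenHarris1984}, and assembles the five displayed terms.

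There is a concrete gap in your execution. Propositions~\ref{elliptic}, \ref{elliptic2points} and \ref{ellipticunramified2pts} enumerate covers of an elliptic curve with two fibres of profile $(2,\ldots,2)$; that profile is specific to the Prym divisor $\overline{D}(\mu;3)$ and plays no role whatsoever in the present question, which concerns a pencil with one fibre containing $3x$ and another containing $(2i-2s)y$. When you restrict an admissible cover in your $\overline{H}$ to an elliptic component of the degeneration, the resulting map never acquires a $(2,\ldots,2)$ fibre, so the numbers $12$, $6$, $12(2i-2s-1)$ from Section~\ref{ellipticsection} do not enter. The inputs that do enter are the $N_{a,b,c,d}$ above—note that the ubiquitous factor $8$ in the stated formula comes from $N_{2i-2s\pm1,2i-2s,3,2}$, not from anything in Section~\ref{ellipticsection}. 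If you want to avoid citing \cite{Lian19} you must redo that degeneration argument and supply these Hurwitz numbers and the limit-linear-series Schubert bookkeeping yourself; the elliptic propositions of Section~\ref{ellipticsection} will not help you here.
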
 
\begin{proof}
	We will apply Proposition 5.4 in \cite{Lian19} to our particular case. Adopting the conventions in \cite{Lian19}, we distinguish two possible cases: 
	
	\textbf{1.} The points $x$ and $y$ are in the same box of the distribution; or 
	
	\textbf{2.} The points $x$ and $y$ are in different boxes of the distribution. 
	
	If we are in Case \textbf{1.} and the "special" box is the first one, we get that the possible vanishing sequences $(a_1,b_1)$ satisfying 
	\[a_1+b_1+2+3+2i-2s = 4i-2s+4\] 
	are $(s-1, 2i-s)$ and $(s,2i-s-1)$. For all the other $g-1$ boxes we have $a_j + b_j = 4i-2s-2$ and we get the unique possible vanishing sequence $(2i-s-2,2i-s)$. In this case we get a contribution of 
	\begin{equation*}
	\left(\int_{\Gr(2,2i-s+1)}\sigma_1^{g-1}\cdot\sigma_{2i-2s}\right)\cdot N_{2,2,2,2}^{g-1}\cdot N_{2i-2s+1,2i-2s,3,2} +\cdots\end{equation*}
	\begin{equation*}
	\cdots+\left(\int_{\Gr(2,2i-s+1)}\sigma_1^{g-1}\cdot\sigma_{2i-2s-1,1}\right)\cdot N_{2,2,2,2}^{g-1}\cdot N_{2i-2s-1,2i-2s,3,2}
	\end{equation*}
	It is clear that $N_{2,2,2,2} = 6$. We know from \cite{KodevenHarris1984} that $N_{2i-2s+1,2i-2s,3,2} = 8(i-s+1)(2i-2s-1)$ and $N_{2i-2s-1,2i-2s,3,2} = 8(i-s-1)(2i-2s+1)$. Using Example 14.7.11 in \cite{Fulton-Intersection} we have the following formula for Schubert intersections: 
	\[ \int_{\Gr(2,d+1)}\sigma_1^{2d-2-\lambda_0-\lambda_1}\cdot\sigma_{\lambda_0,\lambda_1} = \frac{(2d-2-\lambda_0-\lambda_1)!}{(d-1-\lambda_0)!\cdot (d-\lambda_1)!}\cdot(\lambda_0+1-\lambda_1)  \] 
	Applied to our situation, this gives 
	\[ \int_{\Gr(2,2i-s+1)}\sigma_1^{2i-2}\cdot\sigma_{2i-2s} = \frac{(2i-2)!}{(s-1)!\cdot (2i-s)!}\cdot(2i-2s+1) = \frac{2i-2s+1}{2i-1} \binom{2i-1}{s-1}  \] 
	and similarly 
	\[ \int_{\Gr(2,2i-s+1)}\sigma_1^{2i-2}\cdot\sigma_{2i-2s-1,1} = \frac{(2i-2)!}{(s)!\cdot (2i-s-1)!}\cdot(2i-2s-1) = \frac{2i-2s-1}{2i-1}\binom{2i-1}{s}  \]
	We get a contribution of 
	\[ \frac{2i-2s+1}{2i-1} \binom{2i-1}{s-1}\cdot 6^{g-1}\cdot 8(i-s+1)(2i-2s-1) + \frac{2i-2s-1}{2i-1}\binom{2i-1}{s} \cdot 6^{g-1} \cdot 8(i-s-1)(2i-2s+1) \] 
	But the number of distributions having $x$ and $y$ in the same box is equal to $(2i-1)\frac{(3g-2)!}{6^{g-1}}$. Moreover, the order of the $(3g-2)$ simple ramification points is irrelevant to us. We hence get a contribution to the count equal to 
	\[ 8 (i-s+1)(2i-2s-1)(2i-2s+1)\binom{2i-1}{s-1} + 8(i-s-1)(2i-2s-1)(2i-2s+1)\binom{2i-1}{s}\]
	
	We now compute the contribution coming from Case \textbf{2.} when $x$ and $y$ are in different boxes, which we assume to be the first and the second one. In this case, the only possible vanishing sequences are $(a_1, b_1) = (s,2i-s)$ and $(a_2,b_2) = (2i-s-3, 2i-s)$. 
	
	As $N_{k,k,2,2} = 2(k^2-1)$, we get the contribution in this case to be 
	\begin{equation*}
	\left(\int_{\Gr(2,2i-s+1)}\sigma_1^{g-2}\cdot\sigma_2\cdot\sigma_{2i-2s-1}\right)\cdot 6^{g-2}\cdot 16\cdot 2[4(i-s)^2-1] \end{equation*}
	Pieri's rule gives the equality 
	\[ \sigma_2\cdot\sigma_{2i-2s-1} = \sigma_{2i-2s+1} + \sigma_{2i-2s,1}+ \sigma_{2i-2s-1,2}\] 
	We compute 
	\[ \int_{\Gr(2,2i-s+1)}\sigma_1^{2i-3}\cdot\sigma_{2i-2s+1} = \frac{(2i-3)!}{(s-2)!\cdot (2i-s)!}\cdot(2i-2s+2)   \] 
	\[ \int_{\Gr(2,2i-s+1)}\sigma_1^{2i-3}\cdot\sigma_{2i-2s,1} = \frac{(2i-3)!}{(s-1)!\cdot (2i-s-1)!}\cdot(2i-2s)   \]
	\[ \int_{\Gr(2,2i-s+1)}\sigma_1^{2i-3}\cdot\sigma_{2i-2s-1,2} = \frac{(2i-3)!}{s!\cdot (2i-s-2)!}\cdot(2i-2s-2)   \] 
The number of distributions with $x$ and $y$ in different boxes is $\frac{(3g-2)!}{4\cdot6^{g-2}}\cdot g(g-1)$ and we factor out the term $(3g-2)!$ corresponding to the order of the $(3g-2)$ simple ramifications. The contribution we get in this case corresponds to the sum of the last three terms in the proposition.
\end{proof} 

\begin{rmk} When $s = i-1$, the formula gives the known answer $24(6i-4) \binom{2i-1}{i+1}$. 
\end{rmk}

\begin{prop} \label{tripleram}  Let $g=2i-1$ and $[X,\eta] \in \mathcal{R}_{g}$ a generic point. We consider the degree $2i$ maps $\pi\colon X\rightarrow \mathbb{P}^1$ such that the ramification profiles over three branch points $q_1$, $q_2$ and $q_3$ are $(2,\ldots,2), (2,\ldots,2)$ and $(3,1,1,\ldots,1)$ respectively and all other branch points are simple. The number of such maps satisfying 
\[ \mathcal{O}_X(\frac{\pi^{*}(q_1)-\pi^{*}(q_2)}{2}) \cong \eta \]
is equal to $24(i-1)\binom{2i-1}{i}$. 
\end{prop}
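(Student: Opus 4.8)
The plan is to recognise the number as the degree of a generically finite morphism to $\overline{\mathcal{R}}_{2i-1}$ and to evaluate it by a degeneration, the elliptic results of Section~\ref{ellipticsection} controlling one component and a limit--linear--series count in the spirit of \cite{KodMg} and \cite{KodevenHarris1984} the other.

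\emph{Reformulation and finiteness.} A map $\pi$ as in the statement has $\pi^{*}(q_1)=2D_1$, $\pi^{*}(q_2)=2D_2$ with $D_1,D_2$ reduced effective of degree $i$ and $L:=\pi^{*}\mathcal{O}_{\mathbb{P}^1}(1)=\mathcal{O}_X(2D_1)=\mathcal{O}_X(2D_2)$; writing $M:=\mathcal{O}_X(D_1)$, the condition $\mathcal{O}_X(D_1-D_2)\cong\eta$ is just $\mathcal{O}_X(D_2)=M\otimes\eta$. Thus such maps correspond to line bundles $M\in\mathrm{Pic}^i(X)$ with $h^0(M)=h^0(M\otimes\eta)=1$, reduced $D_1\in|M|$, $D_2\in|M\otimes\eta|$, and $|M^{\otimes2}|$ a base--point--free pencil possessing a point of ramification $\geq 3$, the $2$--torsion condition then being automatic since $\mathcal{O}_X(D_1-D_2)=M\otimes(M\otimes\eta)^{-1}=\eta$. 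As $W^{0}_{i}(X)$ and its translate $\{M:M\otimes\eta\in W^{0}_{i}(X)\}$ have pure codimension $i-1$ in $\mathrm{Pic}^i(X)$, for general $[X,\eta]$ they meet in a curve, which the triple--ramification condition cuts to a finite set. Hence the number in question is the degree of the natural morphism $c$ from the space $\overline{H}$ of admissible covers with the prescribed ramification to $\overline{\mathcal{R}}_{2i-1}$, $[\pi]\mapsto[X,\mathcal{O}_X(\tfrac{\pi^{*}(q_1)-\pi^{*}(q_2)}{2})]$ (extended over the boundary as in \cite{FP18}, which suffices for the compact--type point below); since $\dim\overline{H}=6i-6=\dim\overline{\mathcal{R}}_{2i-1}$ and $c$ is dominant, by properness and conservation of number this degree equals the length of $c^{*}([X_0,\eta_0])$ for any $[X_0,\eta_0]$ with finite fibre.

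\emph{Degeneration.} I would take $[X_0,\eta_0]$ general in the boundary divisor of $\overline{\mathcal{R}}_{2i-1}$ lying over $\Delta_1\subset\overline{\mathcal{M}}_{2i-1}$ whose generic member is $[E\cup_p C',\eta]$ with $[E,p]\in\mathcal{M}_{1,1}$ and $[C']\in\mathcal{M}_{2i-2}$ general, $\eta|_{C'}=\mathcal{O}_{C'}$ and $\eta|_E\in E[2]\smallsetminus\{0\}$. Since $\eta$ is supported on the elliptic tail and $2D_1,2D_2$ are the fibres over $q_1,q_2$, the condition $\mathcal{O}_{X_0}(D_1-D_2)\cong\eta$ forces $q_1,q_2$ onto the $\mathbb{P}^1$ meeting $E$: on $C'$ the restriction of $\mathcal{O}_{X_0}(D_1-D_2)$ is forced trivial, while on $E$ it becomes $\mathcal{O}_E(D_1^{E}-D_2^{E})\cong\eta|_E$ up to a twist at $p$. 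For each admissible cover over $[X_0,\eta_0]$ the $E$--aspect is then a cover of $\mathbb{P}^1$ carrying the profiles $(2,\dots,2),(2,\dots,2)$ over $q_1,q_2$, a prescribed profile at $p$, and — according to the distribution — possibly $(3,1,\dots,1)$ over $q_3$ and/or some simple branch points; the $C'$--aspect is a limit $\mathfrak{g}^1_{2i}$ on a general genus--$(2i-2)$ curve with the complementary vanishing sequence at $p$ and simple branching elsewhere.

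\emph{Assembling the count and the main obstacle.} One then sums over the ways of distributing $q_3$ and the $6i-6$ simple branch points between the two components and over the compatible vanishing sequences at $p$. The $E$--contribution, with the constraint $\mathcal{O}_E(D_1^{E}-D_2^{E})\cong\eta|_E$ added, is read off from Proposition~\ref{elliptic}, Proposition~\ref{elliptic2points} and Proposition~\ref{ellipticunramified2pts}, each of the relevant elliptic counts being multiplied by the fraction $\tfrac{1}{3}$ of covers realising the prescribed class of $E[2]$ — by transitivity of the monodromy on $E[2]\smallsetminus\{0\}$ over $\mathcal{M}_{1,1}$, once one checks that no such cover has $D_1^{E}\sim D_2^{E}$. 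The $C'$--contribution is a Brill--Noether number on a general genus--$(2i-2)$ curve with a prescribed ramification point, which I would compute by the degeneration/Schubert--calculus method of \cite{KodMg}, \cite{KodevenHarris1984} exactly as in the proof of Proposition~\ref{2i-2s,3} — using $N_{2,2,2,2}=6$, $N_{k,k,2,2}=2(k^2-1)$, the Grassmannian integrals of Example~14.7.11 in \cite{Fulton-Intersection}, and the resulting binomials $\binom{2i-1}{\bullet}$. Multiplying $E$-- and $C'$--contributions, weighting each admissible cover by its local multiplicity in $\overline{H}$ as in the proof of Proposition~\ref{elliptic}, and summing over all admissible distributions should yield $24(i-1)\binom{2i-1}{i}$. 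The main obstacle is precisely this bookkeeping — enumerating the admissible distributions and the vanishing sequences at $p$, pinning down the corresponding Grassmannian and Hurwitz factors for the $C'$--aspect, and tracking the local multiplicities — together with the transversality/genericity checks (finiteness and reducedness of $D_1,D_2$ for general $[X,\eta]$, no contribution from non--refined limit linear series, and the $\tfrac{1}{3}$ equidistribution). The one genuinely new ingredient relative to the $\overline{\mathcal{M}}_g$ situation is verifying that the $2$--torsion condition localises on the elliptic component with the correct twist at $p$.
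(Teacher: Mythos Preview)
Your strategy is the paper's: interpret the count as the degree of $c_{\mu;3}\colon\overline{H}_{2i,\mu,3}\to\overline{\mathcal{R}}_{2i-1}$ and compute it over a generic point $[C\cup_{x\sim y}E,\mathcal{O}_C,\eta_E]$ of $\Delta_1$ with the $2$-torsion supported on the elliptic tail, splitting according to whether the triple ramification lands on $E$ or on $C$. Your localisation of $q_1,q_2$ to the elliptic side and the $\tfrac13$ equidistribution over $E[2]\smallsetminus\{0\}$ are exactly what the paper uses.

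Two points where your outline diverges from what is actually needed. First, on the elliptic side only Proposition~\ref{elliptic} together with Proposition~2.5 of \cite{Bud-adm} (profiles $(2i-2s),(2,\dots,2),(2,\dots,2),(2,1,\dots,1)$, giving $6$ covers) are used; Propositions~\ref{elliptic2points} and~\ref{ellipticunramified2pts} concern $2$-pointed elliptic curves and do not enter this proof. Second, the $C'$-side is much lighter than the Schubert machinery of Proposition~\ref{2i-2s,3}: one needs only the classical Harris--Mumford numbers $a(d,g)=(2d-g-1)\,g!/\bigl(d!(g-d+1)!\bigr)$ counting $\mathfrak g^1_d$'s with a prescribed total ramification at $x$, and $e(d,g)=8\,g!/\bigl((d-3)!(g-d+2)!\bigr)-8\,g!/\bigl(d!(g-d-1)!\bigr)$ counting those with an additional triple point. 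The two cases then contribute $2\cdot\sum_{s=0}^{i-2}4\,a(2i-1-s,2i-2)=8\binom{2i-2}{i}$ (triple on $E$; multiplicity $2$ since the hyperelliptic involution of $[E,y]$ does not fix $\pi_{|E}$) and $\sum_{s=0}^{i-1}2\,e(2i-s,2i-2)=16(3i-2)\binom{2i-2}{i}$ (triple on $C$; multiplicity $1$), whose sum is $24(i-1)\binom{2i-1}{i}$. So the ``bookkeeping obstacle'' you flag is in fact modest once the right inputs are identified.
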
 
\begin{proof}
	We consider the Hurwitz scheme $\overline{H}_{2i,\mu,3}$ parametrizing degree $2i$ maps having ramification profiles $(2,\ldots,2)$, $(2,\ldots,2)$ and $(3,1,\ldots,1)$ over three branch points $q_1, q_2$ and $q_3$, while all other ramifications are simple. 
	We have a rational map 
	\[ c_{\mu;3}\colon \overline{H}_{2i,\mu,3} \rightarrow \overline{\mathcal{R}}_g \] 
	sending $[\pi\colon X \rightarrow \mathbb{P}^1]$ to $[X, \mathcal{O}_X(\frac{\pi^{*}(q_1)-\pi^{*}(q_2)}{2})]$ if $X$ is smooth. The existence of a twist as in \cite{FP18} implies that the map can be extended to admissible covers with source curve of compact type, see \cite{Bud-adm}. 
	
	We want to prove that the degree of $c_{\mu;3}$ is $24(i-1)\binom{2i-1}{i}$. For this we consider $[C\cup_{x\sim y} E, \mathcal{O}_C, \eta_E]$ a generic point of the boundary divisor $\Delta_1$ and compute the length of the cycle $c_{\mu;3}^{*}([C\cup_{x\sim y} E, \mathcal{O}_C, \eta_E])$. The admissible covers $\pi\colon X\rightarrow \Gamma$ above such a point are of the following form:
	
		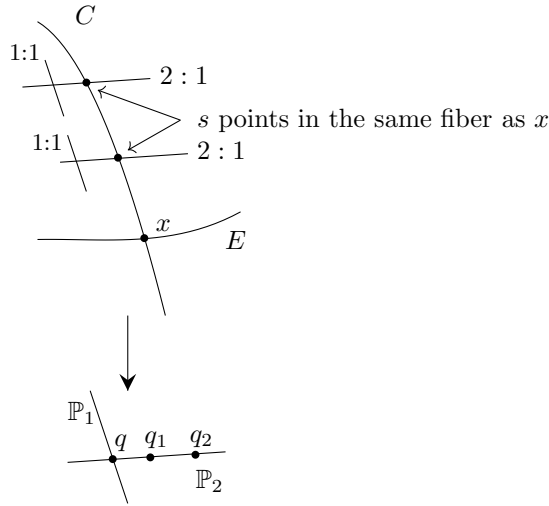
\begin{figure}[H] \centering 
		\begin{tikzpicture}
		\draw [domain=0.3:2] plot ({\x},{sqrt{4*\x-\x*\x} - 2*\x});
		\draw [domain=0.3:3] plot ({\x},{\x/15 - \x*\x/10 + \x*\x*\x/25 - 3});
		\draw [domain=0.6:2.3] plot ({\x}, {\x/15-2});
		\draw [domain=0.6:2.3] plot ({\x-0.5}, {\x/15-1}); 
		\draw [decoration={markings,mark=at position 1 with
			{\arrow[scale=2,>=stealth]{>}}},postaction={decorate}] (1.5,-4) -- (1.5,-5);
		\draw [domain=0.7:2.8] plot ({\x}, {\x/15-6}); 
		\draw [domain=1:1.5] plot ({\x}, {-3*\x-2});
		
		\draw [domain=1.2:1.45] plot ({\x-0.5}, {-3*\x+2});
		\draw [domain=1.2:1.45] plot ({\x-0.8}, {-3*\x+3});
		\node[left = 1mm of {(3.3,-3)}] {$E$};
		\node[left = 1mm of {(1.3,0)}] {$C$};
		\node[left = 1mm of {(1.3,-5.3)}] {$\mathbb{P}_1$};
		\node[left = 1mm of {(3,-6.2)}] {$\mathbb{P}_2$};
		\fill (1.3,-5.91) circle[radius=1.5pt];
		\node[right = 1mm of {(1.1,-5.7)}] {$q$};
		
		\fill (1.8,-5.887)  circle[radius=1.5pt];
	    \node[right = 1mm of {(1.5,-5.66)}] {$q_1$};
	    
	    \fill (2.4,-5.852)  circle[radius=1.5pt];
		\node[right = 1mm of {(2.1,-5.64)}] {$q_2$};
		
		\fill (0.95,-0.9)  circle[radius=1.5pt];
		\fill (1.37,-1.9)  circle[radius=1.5pt];
		\fill (1.72,-2.97)  circle[radius=1.5pt];
		\node[right = 1mm of {(1.65,-2.8)}] {$x$};
		\draw [->]  (2.2, -1.4) -- (1.5,-1.8) ;
		\draw [->]  (2.2, -1.4) -- (1.1,-1) ;
		\node[right = 1mm of {(-0.3,-0.5)}] {$\text{\small 1:1}$};
		\node[right = 1mm of {(0,-1.7)}] {$\text{\small 1:1}$};
		\node[right = 1mm of {(2.2,-1.4)}] {$s$ points in the same fiber as $x$};
		\node[right = 1mm of {(1.7,-0.8)}] {$2:1$};
		\node[right = 1mm of {(2.2,-1.8)}] {$2:1$};
		\end{tikzpicture} \caption{A map [$\pi\colon X \rightarrow \mathbb{P}_1\cup_q \mathbb{P}_2]$ in $\overline{H}_{2i,\mu;3}$ over $[C\cup_{x\sim y}E, \mathcal{O}_C, \eta_E]$ }
	\end{figure} 
If we denote by $s$ the number of points on $C$ in the same fiber as $x$ we get as in \cite{Bud-adm}, Proposition 4.3 the inequalities: 
\[ 2i-1 \leq \mathrm{deg}(\pi_{|C}) + s \leq 2i \]
If the triple ramification is on a rational component, we get the contradiction $\mathrm{deg}(\pi_{|C}) + s  \leq 2i-3$. Hence the triple point is either on $C$ or on $E$. 

\textbf{Case I:} The triple point is on $E$. In this case, the map $\pi_{|E}$ is of degree $2i-2s$ with ramification profiles $(2i-2s-1,1)$, $(2,\ldots,2)$, $(2,\ldots,2)$ and $(3,1\ldots,1)$ over the points $q, q_1,q_2$ and $q_3$. Moreover $\pi_{|C}$ has degree $2i-s-1$, with a point of order $2i-2s-1$ at $x$. 

We know from Proposition \ref{elliptic} that the number of such maps $\pi_{|E}$ which moreover satisfy 
\[ \mathcal{O}_E(\frac{\pi_{|E}^{*}(q_1)- \pi_{|E}^{*}(q_2)}{2}) \cong \eta_E \]
is equal to $4$. As the automorphism of $[E,y]$ is not an automorphism of $\pi_{|E}$, it follows that such admissible covers will appear in our count with multiplicity $2$. Consequently, the contribution coming from \textbf{Case I} is equal to 
\[ 2\cdot \sum_{s=0}^{i-2}4\cdot a(2i-1-s,2i-2) \]
where $a(d,g) \coloneqq (2d-g-1)\cdot\frac{g!}{d!(g-d+1)!}$. The combinatorial identities in \cite{Bud-adm}, Proposition 2.7 imply: 
\[ 2\cdot \sum_{s=0}^{i-2}4\cdot a(2i-1-s,2i-2) = 8\binom{2i-2}{i}\]

\textbf{Case II:} The triple point is on $C$. In this case, the map $\pi_{|E}$ is of degree $2i-2s$ with ramification profiles $(2i-2s)$, $(2,\ldots,2)$, $(2,\ldots,2)$ and $(2,1,\ldots,1)$. Moreover, the map $\pi_{|C}$ has degree $2i-s$ with order $2i-2s$ at $x$ and another triple point somewhere else. 

The number of such maps $\pi_{|C}$ was computed in \cite{KodevenHarris1984} to be $e(2i-s, 2i-2)$ when $s>0$. We recall that $e(d,g)$ was defined as: 
\[e(d,g) = 8\frac{g!}{(d-3)!(g-d+2)!} - 8\frac{g!}{d!(g-d-1)!} \]
 Proposition 5.4 in \cite{Lian19} implies that this formula is also valid for $s= 0$. The number of such maps $\pi_{|E}$ was computed in \cite{Bud-adm}, Proposition 2.5 to be $6$, and we divide it by $3$ because of the condition   
\[ \mathcal{O}_E(\frac{\pi_{|E}^{*}(q_1)- \pi_{|E}^{*}(q_2)}{2}) \cong \eta_E \]
In this case, $\pi_{|E}$ is fixed by the automorphism of $[E,y]$, hence the multiplicity will be $1$. 

It follows the contribution coming from \textbf{Case II} is 
\[\sum_{s=0}^{i-1}2\cdot e(2i-s,2i-2) = 16(3i-2)\binom{2i-2}{i}\]
In particular, we get 
\[\mathrm{deg}(c_{\mu;3}) = 8\binom{2i-2}{i}+ 16(3i-2)\binom{2i-2}{i} = 24(i-1)\binom{2i-1}{i}\]
\end{proof}

By the theory of double coverings, $\mathcal{R}_g$ can be seen as parametrizing $2:1$ \'etale covers $\pi\colon \tilde{C}\rightarrow C$ where the target curve is smooth of genus $g$. We consider the map $\chi_g\colon \mathcal{R}_g \rightarrow \mathcal{M}_{2g-1}$ sending $[\pi\colon \tilde{C}\rightarrow C]$ to $[\tilde{C}]$ and we are interested in characterizing its image.

 In the case $g=2i$, we know from \cite{FarAprGreencong} that $\mathrm{Im}(\chi_g)$ is contained in the Hurwitz locus $\mathcal{M}^1_{2g-1,g}$ parametrizing curves $[X] \in \mathcal{M}_{2g-1}$ of gonality $\mathrm{gon}(X)\leq g$. Using maps as in Proposition \ref{tripleram}, we get a similar result for the case when $g = 2i-1$. For this, we consider the Hurwitz divisor $\mathfrak{TR}_{2i}$ on $\mathcal{M}_{4i-3}$ parametrizing curves admitting a degree $2i$ map to $\mathbb{P}^1$ with two unspecified triple ramification points, see \cite{FarkasFermat}.

\begin{prop} \label{nontransversality} For $g= 2i-1$ and $i\geq 2$ we have $\mathrm{Im}(\chi_g) \subseteq \mathfrak{TR}_{2i}$.
\end{prop}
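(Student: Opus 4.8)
The plan is to reduce to a generic Prym curve and then write down, on its \'etale double cover, an explicit degree $2i$ pencil carrying two triple ramification points. Since $\mathfrak{TR}_{2i}$ is Zariski closed in $\mathcal{M}_{4i-3}$, since $\chi_g$ is a morphism, and since $\mathcal{R}_{g}$ is irreducible, the preimage $\chi_g^{-1}(\mathfrak{TR}_{2i})$ is a closed subset of $\mathcal{R}_g$; hence it suffices to prove $\chi_g([C,\eta])\in\mathfrak{TR}_{2i}$ for $[C,\eta]$ ranging over a dense open subset. Fixing such a $[C,\eta]$, the hypothesis $i\geq 2$ makes the count in Proposition \ref{tripleram} strictly positive, so I may choose a degree $2i$ map $\pi\colon C\to\mathbb{P}^1$ with ramification profiles $(2,\dots,2)$, $(2,\dots,2)$, $(3,1,\dots,1)$ over branch points $q_1,q_2,q_3$, simple elsewhere, and satisfying $\eta\cong\mathcal{O}_C\big(\tfrac{\pi^{*}(q_1)-\pi^{*}(q_2)}{2}\big)$.

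Next I would construct the cover together with its pencil. Because the profiles over $q_1$ and $q_2$ are $(2,\dots,2)$, we may write $\pi^{*}(q_1)=2D_1$ and $\pi^{*}(q_2)=2D_2$ with $D_1,D_2$ reduced, effective, disjoint of degree $i$. Taking a coordinate $t$ on the target with $\mathrm{div}(t)=q_1-q_2$, the function $w\coloneqq\pi^{*}t$ on $C$ has $\mathrm{div}(w)=2(D_1-D_2)$, so the double cover of $C$ obtained by adjoining $\sqrt{w}$ --- equivalently the normalization of the fibre product of $\pi$ with the squaring map $\rho\colon\mathbb{P}^1\to\mathbb{P}^1$, $s\mapsto s^2$ --- is unramified over $C$ (all zeros and poles of $w$ have even order), and the $2$-torsion line bundle it defines is $\mathcal{O}_C(D_1-D_2)\cong\eta$. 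As $\eta$ is nontrivial this cover is connected, hence it is precisely $\widetilde C=\chi_g([C,\eta])$, with structure map $f\colon\widetilde C\to C$. The function $u=\sqrt{w}$ then defines a morphism $\psi\colon\widetilde C\to\mathbb{P}^1$; from $2\,\mathrm{div}(u)=f^{*}\mathrm{div}(w)=2f^{*}(D_1-D_2)$ together with $f^{*}D_1\cap f^{*}D_2=\varnothing$ one reads off that $\psi$ has degree $\deg f^{*}D_1=2i$.

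To finish I would compute ramification indices. By construction $\rho\circ\psi=\pi\circ f$, since both maps to the target pull $t$ back to $w=u^2$. Let $x\in C$ be the triple ramification point of $\pi$ over $q_3$, and let $x',x''$ be its two distinct preimages under the \'etale map $f$. For $\xi\in\{x',x''\}$, multiplicativity of ramification indices gives $3=e_\pi(x)=e_{\pi\circ f}(\xi)=e_{\rho\circ\psi}(\xi)=e_\rho(\psi(\xi))\cdot e_\psi(\xi)$; since $\psi(\xi)$ lies over $q_3\neq q_1,q_2$ it is not a branch point of $\rho$, so $e_\rho(\psi(\xi))=1$ and thus $e_\psi(\xi)=3$. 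Hence $\psi\colon\widetilde C\to\mathbb{P}^1$ is a degree $2i$ pencil with two distinct triple ramification points, so $[\widetilde C]\in\mathfrak{TR}_{2i}$, which combined with the reduction above proves the proposition.

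The one step I expect to require care is the identification of $\widetilde C$: one has to verify that adjoining $\sqrt{\pi^{*}t}$ yields exactly the \'etale double cover attached to $\eta$. This is where the assumption that $\pi$ has profile $(2,\dots,2)$ over \emph{both} $q_1$ and $q_2$ is indispensable --- it says precisely that the ramification of $\pi$ over $q_1,q_2$ cancels that of the squaring map, so that although the fibre product $C\times_{\mathbb{P}^1}\mathbb{P}^1$ is nodal over $\pi^{-1}(q_1)\cup\pi^{-1}(q_2)$, its normalization is \'etale over $C$ and hence coincides with the genus $4i-3$ curve $\widetilde C$; the rest is Riemann--Hurwitz bookkeeping. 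It is worth noting that the same construction applied to other pencils furnished by the enumerative results of this paper places $\mathrm{Im}(\chi_g)$ on further Hurwitz divisors, for both parities of $g$, as announced in the introduction.
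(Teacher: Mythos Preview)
Your proof is correct and follows essentially the same strategy as the paper: use Proposition~\ref{tripleram} to produce, on a generic $[C,\eta]$, a degree $2i$ map with the special ramification, realize $\widetilde{C}$ as the normalization of the fibre product of this map with the squaring map $s\mapsto s^{2}$, and then read off two triple ramifications on the second projection. The only difference is presentational: where the paper invokes \cite{ACGC1}, Appendix~B, Exercise~13 to identify $\widetilde{C}$ with the normalization of $\{(x,y)\in C\times\mathbb{P}^1\mid y^2=f(x)\}$ and then simply asserts the ramification of the projection, you spell out both the identification (via $\mathrm{div}(w)=2(D_1-D_2)$ and connectedness from $\eta\neq\mathcal{O}_C$) and the ramification computation (via $\rho\circ\psi=\pi\circ f$ and multiplicativity of indices), and you also make explicit the reduction from all of $\mathcal{R}_g$ to a generic point.
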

\begin{proof}
	Let $[C, \eta] \in \mathcal{R}_g$ and $\pi\colon \tilde{C}\rightarrow C$ its associated double cover. As in Proposition \ref{tripleram}, we consider a degree $2i$ cover $f\colon C \rightarrow \mathbb{P}^1$ having ramification profiles $(2,\ldots,2)$, $(2,\ldots,2)$ and $(3,1,\ldots,1)$ over $0$, $\infty$ and $1$ and satisfying
	\[ \mathcal{O}_C(\frac{f^*(0)-f^*(\infty)}{2})\cong \eta \]  
    We know from \cite{ACGC1}, Appendix B, Exercise 13 that $\tilde{C}$ is isomorphic to the normalization of 
    \[ \left\{(x,y)\in C\times\mathbb{P}^1 \ | \ y^2 = f(x) \right\} \]
    The projection to $\mathbb{P}^1$ given by $(x,y)\mapsto y$ is a degree $2i$ map having two triple ramification points, above $1$ and respectively $-1$.
\end{proof}
In fact, the method of Proposition \ref{nontransversality} can be employed to show that $\mathrm{Im}(\chi_g)$ is contained in many other Hurwitz divisors, for both $g$ odd and $g$ even.

Similarly to Proposition \ref{tripleram}, we prove:
\begin{prop} \label{2prym}
	Let $g=2i-1$ and $[X, z] \in \mathcal{M}_{g,1}$ a generic point. We consider degree $2i$ maps $\pi\colon X\rightarrow \mathbb{P}^1$ such that the ramification profiles over $q_1$ and $q_2$ are $(2,\ldots,2), (2,\ldots,2)$ while $\pi^{-1}(q_3)$ contains $z$ as the unique ramified point of the fiber. The number of such maps which furthermore satisfy
	\[ \mathcal{O}_X(\frac{\pi^{*}(q_1)-\pi^{*}(q_2)}{2}) \cong \eta \]
	for a 2-torsion line bundle $\eta$ of $X$, is equal to $2\binom{2i-1}{i}$. 
\end{prop}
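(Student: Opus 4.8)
The plan is to follow the pattern of the proof of Proposition \ref{tripleram}, working directly on a Hurwitz space. Write $g=2i-1$ and let $\overline{H}$ be the Hurwitz scheme parametrizing admissible covers $\pi\colon X\to\mathbb{P}^1$ of degree $2i$ with ramification $(2,\dots,2)$ over $q_1$ and over $q_2$, a single simple ramification at a marked point $z$ over $q_3$, and simple branching elsewhere. As in \cite{Bud-adm}, the existence and uniqueness of the twist of \cite{FP18} yields a rational map
\[ c\colon\overline{H}\longrightarrow\overline{\mathcal{R}}_{g,1},\qquad [\pi\colon X\to\mathbb{P}^1,z]\longmapsto\Bigl[X,\ \mathcal{O}_X\bigl(\tfrac{\pi^{*}(q_1)-\pi^{*}(q_2)}{2}\bigr),\ z\Bigr], \]
defined at every admissible cover whose source is of compact type; one checks $\dim\overline{H}=\dim\overline{\mathcal{R}}_{g,1}=6i-5$, and the number to be computed is precisely $\deg(c)$, the computation below also showing that $c$ is generically finite.

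To compute $\deg(c)$ I would take the general point $[C\cup_{x\sim y}E,\ \mathcal{O}_C,\ \eta_E,\ z]$ of the boundary divisor of $\overline{\mathcal{R}}_{g,1}$ with $[C,x]\in\mathcal{M}_{2i-2,1}$ general, $[E,y,z]\in\mathcal{M}_{1,2}$ general, $y$ glued to $x$, $\mathcal{O}_C$ trivial on $C$ and $\eta_E$ a nonzero $2$-torsion bundle on $E$; since the source is of compact type, $\deg(c)$ equals the length of $c^{*}$ of this point. Arguing as in Proposition \ref{tripleram} and \cite{Bud-adm}, Proposition 4.3, an admissible cover over it has target $\mathbb{P}_1\cup_q\mathbb{P}_2$ with the subcurve dominating $\mathbb{P}_1$ stabilizing to $C$ and the one dominating $\mathbb{P}_2$ stabilizing to $E$; as $z\in E$ one has $q_3\in\mathbb{P}_2$, and the requirement that the induced $2$-torsion bundle be trivial on $C$ and equal to $\eta_E\neq\mathcal{O}_E$ on $E$ forces $q_1,q_2\in\mathbb{P}_2$ as well (for general $y$ no nonzero translate $\mathcal{O}_E(ky)$ is $2$-torsion). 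Setting $\deg\pi_{|E}=2i-2s$, the map $\pi_{|E}$ then has profiles $(2,\dots,2)$ over $q_1$ and $q_2$, the point $z$ simply ramified over $q_3$, profile $(2i-2s)$ or $(2i-2s-1,1)$ over $q$, and by Riemann--Hurwitz either no further branching (first case) or exactly one more simple branch point (second case) on $\mathbb{P}_2$; correspondingly $\pi_{|C}$ has degree $2i-s$ or $2i-s-1$, with a point of order $2i-2s$ or $2i-2s-1$ at $x$, and is simply branched elsewhere.

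I expect the case of profile $(2i-2s)$ over $q$ to contribute nothing: there $\pi_{|E}$ is rigid given $[E,y]$, so its mult-$2$ point over $q_3$ is a determined point of $E$, which for a general $z$ is not $z$. In the remaining case, $\pi_{|E}$ is one of the maps counted by Proposition \ref{elliptic2points}, and exactly a third of these satisfy $\mathcal{O}_E\bigl(\tfrac{\pi_{|E}^{*}(q_1)-\pi_{|E}^{*}(q_2)}{2}\bigr)\cong\eta_E$, by the symmetry argument used in the proof of Proposition \ref{tripleram}; that is, $4$ of them for $s<i-1$ and $2$ for $s=i-1$, each counted with multiplicity $1$ since a general $[E,y,z]$ has no nontrivial automorphisms. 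The number of compatible maps $\pi_{|C}$ for a fixed $s$ is the Brill--Noether number with imposed ramification $a(2i-1-s,2i-2)$ studied in \cite{KodevenHarris1984} and \cite{Lian19}. Using the combinatorial identities of \cite{Bud-adm}, Proposition 2.7, this should give
\[ \deg(c)=4\sum_{s=0}^{i-2}a(2i-1-s,2i-2)+2\,a(i,2i-2)=4\binom{2i-2}{i}+\frac{2}{2i-1}\binom{2i-1}{i}=2\binom{2i-1}{i}. \]

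The step I expect to be the main obstacle is the precise combinatorial description of the admissible covers over the chosen boundary point: confirming that profile $(2i-2s)$ over $q$ genuinely contributes nothing once $z$ is general, pinning down the exact ramification of $\pi_{|C}$ at $x$ and the admissible range of $s$, tracking the multiplicities, and then recognizing the resulting sum of products of binomials as $2\binom{2i-1}{i}$. A secondary point that must be checked with care, as in \cite{Bud-adm}, is that the twist supplied by \cite{FP18} at the node is trivial, so that the Prym condition for the full admissible cover really does reduce to the elliptic condition handled by Proposition \ref{elliptic2points}.
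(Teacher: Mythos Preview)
Your proposal is correct and follows essentially the same route as the paper: the same degeneration to $[C\cup_{x\sim y}E,\mathcal{O}_C,\eta_E,z]$ with $[E,y,z]\in\mathcal{M}_{1,2}$ generic, the same elimination of the $(2i-2s)$ profile at the node by genericity of $z$, the same use of Proposition~\ref{elliptic2points} (divided by $3$) for the elliptic piece and of $a(2i-1-s,2i-2)$ for the genus-$(2i-2)$ piece. The only cosmetic differences are that the paper writes the final sum as $4\sum_{s=0}^{i-1}a(2i-s-1,2i-2)-2a(i,2i-2)$ and maps to $\overline{\mathcal{CR}}^{\mathrm{ct}}_g$ rather than $\overline{\mathcal{R}}_{g,1}$.
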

\begin{proof}
	We consider the stack $\overline{\mathcal{CR}}^{\mathrm{ct}}_g \coloneqq\overline{\mathcal{R}}^{\mathrm{ct}}_{g} \times_{\overline{\mathcal{M}}_g} \overline{\mathcal{M}}_{g,1}$ parametrizing pairs $[C,\eta,p]$, where $[C,p]$ is a marked stable curve of compact type and $\eta$ is a line bundle on $C$ that is 2-torsion when restricted to any irreducible component of the curve. 
	
	We define $\overline{H}_{\mu}$ to be the scheme parametrizing admissible covers of degree $2i$ with ramification profile $(2,\ldots,2)$ over two branch points $q_1, q_2$. The order of the simple branch points is irrelevant, except for one which we denote $q_3$. We can naturally define a rational map 
	\[c_{\mu,1}\colon \overline{H}_{\mu} \dasharrow \overline{\mathcal{CR}}^{\mathrm{ct}}_g\]
	sending an admissible cover $[\pi\colon X\rightarrow \mathbb{P}^1]$ to $[X, \mathcal{O}_X(\frac{\pi^{*}(q_1)-\pi^{*}(q_2)}{2}),z]$ where $z$ is the ramified point over $q_3$. Due to the existence of a twist as in \cite{FP18}, this map can be extended over admissible covers with source curve of compact type. 
	
	Our proof reduces again to computing the length of the cycle $c_{\mu,1}^{*}([C\cup_{x\sim y}E, \mathcal{O}_C, \eta_E, z])$ where $[E,y,z] \in \mathcal{M}_{1,2}$ and $[C,x] \in \mathcal{M}_{g-1,1}$ are generic. 
	
	Consider an admissible cover $\pi\colon X\rightarrow \Gamma$ mapped by $c_{\mu,1}$ to this point. The condition imposed on this cover imply that $q_1$ and $q_2$ are contained in $\mathbb{P}_2$, the target of $\pi_{|E}$. As before, we denote by $s$ the number of points on $C$ in the same fiber of $\pi$ as $x$ and we have the inequalities 
	\[2i-1 \leq \mathrm{deg}(\pi_{|C}) + s \leq 2i\] 
	
	If $\mathrm{deg}(\pi_{|C}) =2i-s$, it follows that the order at the node $x$ is $2i-2s$ and hence $\pi_{|E}$ has ramification profiles $(2i-2s), (2,\ldots,2), (2,\ldots,2)$ and $(2,1,1,\ldots,1)$ with the totally ramified point and the ramification point over $q_3$ being generic. This is clearly impossible. 
	
	The only possible case is that $\mathrm{deg}(\pi_{|C}) = 2i-s-1$ and hence $\mathrm{ord}_x(\pi_{|C}) = 2i-2s-1$. In this case, the map $\pi_{|E}$ is as in the hypothesis of Proposition \ref{elliptic2points}.
    
    As a consequence of our description, we get that the degree of $c_{\mu,1}$ is 
	\[ 4\sum_{s=0}^{i-1}a(2i-s-1,2i-2) - 2a(i,2i-2) = 2 \binom{2i-1}{i}\]
\end{proof}

Before we could proceed with our test curve computations, we require one more Prym enumerative result:
\begin{prop} \label{prymunram}
 Let $g=2i-1$, a generic pointed curve $[X,z] \in \mathcal{M}_{g,1}$ and $\eta \in \mathrm{Pic}(X)[2]\setminus \left\{0\right\}$. We consider the maps $\pi\colon X \rightarrow \mathbb{P}^1$ of degree $2i$ such that the ramification profile over two points $q_1$ and $q_2$ is $(2,\ldots,2)$ and $z$ appears as a simple point in the fiber above a branch point $q_3$. The number of such maps satisfying 
 	\[ \mathcal{O}_X(\frac{\pi^{*}(q_1)-\pi^{*}(q_2)}{2}) \cong \eta \]
 is equal to $20(i-1)\binom{2i-1}{i}$.
\end{prop}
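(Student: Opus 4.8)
\emph{The plan is to imitate the proof of Proposition \ref{2prym}, feeding in Proposition \ref{ellipticunramified2pts} in place of Proposition \ref{elliptic2points}.} First I would introduce the Hurwitz stack $\overline{H}_{\mu,1}$ parametrizing pairs consisting of a degree $2i$ admissible cover $\pi\colon X\rightarrow \mathbb{P}^1$ with ramification profile $(2,\ldots,2)$ over two branch points $q_1,q_2$, all other branch points simple with one of them distinguished and called $q_3$, together with a choice of an \emph{unramified} point $x$ in the fiber over $q_3$; this stack maps finitely of degree $2i-2$ onto the stack $\overline{H}_\mu$ of Proposition \ref{2prym}, so it has the expected dimension. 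There is a rational map
\[ c_{\mu,1,1}\colon \overline{H}_{\mu,1}\dashrightarrow \overline{\mathcal{CR}}^{\mathrm{ct}}_g,\qquad [\pi\colon X\rightarrow\mathbb{P}^1,\,x]\longmapsto \Big[X,\ \mathcal{O}_X\big(\tfrac{\pi^{*}(q_1)-\pi^{*}(q_2)}{2}\big),\ x\Big], \]
which extends over admissible covers whose source is of compact type by the existence and uniqueness of a twist as in \cite{FP18}, exactly as in Proposition \ref{2prym}. Since for generic $[X,z]\in\mathcal{M}_{g,1}$ and generic $\eta$ the covers $\pi$ satisfying the Prym condition already form at most a curve, and the further requirement that $\pi(z)$ be a branch point cuts this down to a finite set, $c_{\mu,1,1}$ is generically finite and dominant, and the number we want is exactly $\deg(c_{\mu,1,1})$.

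To compute this degree I would evaluate the length of $c_{\mu,1,1}^{*}([C\cup_{x\sim y}E,\ \mathcal{O}_C,\ \eta_E,\ z])$ at a generic point of the relevant boundary divisor, with $[E,y,z]\in\mathcal{M}_{1,2}$ and $[C,x]\in\mathcal{M}_{g-1,1}$ generic and $z$ lying on the elliptic tail $E$. As in Propositions \ref{tripleram} and \ref{2prym}, the condition that the twist restrict to $\mathcal{O}_C$ on $C$ forces $q_1$ and $q_2$ onto the target $\mathbb{P}_2$ of $\pi_{|E}$, and since $z$ lies over $q_3$ also $q_3\in\mathbb{P}_2$. Writing $s$ for the number of points of $C$ over the node, the usual inequalities $2i-1\le \deg(\pi_{|C})+s\le 2i$ hold, and the case $\deg(\pi_{|C})=2i-s$ is excluded by the same dimension count as in Proposition \ref{2prym}. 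Hence $\deg(\pi_{|C})=2i-s-1$, the order of $\pi_{|C}$ at the node is $2i-2s-1$, the number of such maps on a generic $[C,x]$ is the quantity $a(2i-s-1,2i-2)$ of \cite{Bud-adm}, and the restriction $\pi_{|E}$ — once the rational bridges and bubbles that collapse under stabilization are correctly accounted for, in particular the components over $\mathbb{P}_2$ on which $z$ may lie — is precisely of the shape counted by Proposition \ref{ellipticunramified2pts}, giving $12(2i-2s-1)$ maps, of which a third satisfy $\mathcal{O}_E(\tfrac{\pi_{|E}^{*}(q_1)-\pi_{|E}^{*}(q_2)}{2})\cong\eta_E$. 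Since $[E,y,z]$ is generic there are no extra automorphisms and every admissible cover is counted with multiplicity one; summing the resulting contributions over the admissible range of $s$, and multiplying by the number of ways to glue over the node, produces a sum of products of binomial coefficients.

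The last step is to evaluate this sum in closed form, using the combinatorial identities of \cite{Bud-adm}, Proposition 2.7, together with the evaluations of $\sum_s a(2i-s-1,2i-2)$ and of the weighted sum $\sum_s (2i-2s-1)\,a(2i-s-1,2i-2)$ that are already implicit in Propositions \ref{tripleram} and \ref{2prym}, reducing everything to $20(i-1)\binom{2i-1}{i}$; a consistency check is obtained by comparing with the extremal degeneration, in the spirit of the Remark following Proposition \ref{2i-2s,3}, and with the value recorded in Proposition \ref{2prym}. I expect the main obstacle to be the second step: carefully classifying \emph{all} admissible covers over the test point — enumerating those with rational bridges over the components of the target and with $z$ sitting on a rational component that collapses under stabilization, checking that these are exactly the configurations packaged by Proposition \ref{ellipticunramified2pts}, and verifying that no further boundary strata of the target (e.g.\ longer chains) contribute — so that the bookkeeping leading to the final combinatorial identity is complete.
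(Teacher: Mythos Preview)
Your overall strategy---degenerate to $[C\cup_{x\sim y}E,\mathcal O_C,\eta_E,z]$ and count admissible covers---is the paper's strategy, but your classification of the covers over this point is incomplete, and the resulting sum gives the wrong number.

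The step that fails is the sentence ``since $z$ lies over $q_3$ also $q_3\in\mathbb P_2$.'' This is false. Although $z$ lies on $E$ in the stable model, in the admissible cover $z$ can sit on a rational tail attached to the unramified preimage of the node on $E$; that tail maps with degree~$1$ to $\mathbb P_1$, and then $q_3\in\mathbb P_1$ with the ramified point of the $q_3$-fibre lying on $C$. This is an entire case (Case~III in the paper) that your argument excludes a priori. There the map $\pi_{|E}$ has profile $(2i-2s-1,1),(2,\ldots,2),(2,\ldots,2),(2,1,\ldots,1),(2,1,\ldots,1)$ with both simple ramification points generic on $E$, which is \emph{not} the configuration of Proposition~\ref{ellipticunramified2pts}; and on the $C$-side one must choose one of the $6i-6$ simple branch points to be $q_3$. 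This case alone contributes $24(i-1)\binom{2i-2}{i-1}$.

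There is a second omission even when $q_3\in\mathbb P_2$: the ramification over $q_3$ need not be on $E$ at all, but on a rational component $R$ over $\mathbb P_2$ attached to $C$ at a point of order~$2$ over the node (Case~I in the paper). Then $\pi_{|E}$ is totally ramified of order $2i-2s-2$ at the node, not $(2i-2s-1,1)$, and the count on the $C$-side is $2(4i-4s-1)\binom{2i-2}{s-1}$ from \cite{Bud-adm}, Theorem~2.3, \emph{not} $a(2i-s-1,2i-2)$. So your single formula $a(2i-s-1,2i-2)\cdot\frac{1}{3}\cdot 12(2i-2s-1)$ captures only the middle case (ramification over $q_3$ on $E$); for $i=2$ it yields $12$, whereas the correct total is $60$, the missing $48$ coming from Case~III. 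You correctly anticipated that the classification is the delicate step---but the two families of covers above are exactly what is missing, and neither is absorbed by Proposition~\ref{ellipticunramified2pts}.
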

\begin{proof}
	Let $[C\cup_{x\sim y} E, \mathcal{O}_C, \eta_E, z] \in \overline{\mathcal{CR}}^{\mathrm{ct}}_g$ as considered in the proof of Proposition \ref{2prym}. Let $\overline{H}_{\mu,1}$ be the Hurwitz stack parametrizing pairs of an admissible cover $\pi\colon X \rightarrow \Gamma$ having ramification profile $(2,\ldots,2)$ over $q_1$ and $q_2$, together with an unramified point $z$ in the fibre over a distinguished simple branch point $q_3$. We consider the rational map 
	\[ c_{\mu,1}\colon \overline{H}_{\mu,1}\rightarrow \overline{\mathcal{R}}_{g,1}\] 
	sending a pair $[\pi\colon X \rightarrow \mathbb{P}^1,z] $ to the stabilization of $[X,z]$ together with the corresponding 2-torsion line bundle induced by the difference between the fibers over $q_1$ and $q_2$. 
	
	Again, we ask what is the length of the fiber over $[C\cup_{x\sim y} E,\mathcal{O}_C, \eta_E, z]$. Let $[\pi\colon X \rightarrow \Gamma, z]$ be a point of the Hurwitz stack in this fibre and let $s$ be the number of points on $C$ in the same fiber of $\pi$ as $x$. It follows that for such an admissible cover the points $q_1, q_2$ are in $\mathbb{P}_2$, the target of $\pi_{|E}$, and $2i-1 \leq \mathrm{deg}(\pi_{|C}) + s \leq 2i$. We distinguish three cases depending on the position of the ramification point above $q_3$. 
    
    \textbf{Case I:} The ramified point over $q_3$ is on a rational component $R$. In this case, the map $\pi_{|E}$ has ramification profiles $(2i-2s-2)$, $(2,\ldots,2)$, $(2,\ldots,2)$ and $(2,1,\ldots,1)$ over $q, q_1,q_2$ and another simple branch point, and the marked point $z$, together with all the points in the same fiber as $z$ are unramified points of $\pi_{|E}$. In this case the map $\pi_{|C}$ is of degree $2i-s-1$ and has ramification order $2i-2s-2$ at $x$ and has a point of ramification order $2$ in the same fibre as $x$. Theorem 2.3 in \cite{Bud-adm}, applied to our case implies the number of choices for the map $\pi_{|C}$ is 
    \[ 2\binom{2i-2}{s-1}(4i-4s-1)\]
    while Proposition 2.5 in \cite{Bud-adm} implies that the number of possible maps $\pi_{|E}$ is $2$. 
    For the special rational component $R$, we have two choices for a degree $4$ map $\pi_{|R}$ having ramification profile $(2,1,1)$ over $q, q_3$ and ramification profile $(2,2)$ over $q_1,q_2$. Both  maps admit an automorphism of order 2. 
    Looking at the complete local ring of such admissible covers in the Hurwitz scheme, we see that when normalizing we get $2$ points above each such cover. In particular, each admissible covers contributes with a value of $2$ to the count and we get a contribution of 
    \[ 8\sum_{s=1}^{i-2}(4i-4s-1)\binom{2i-2}{s-1} = 4(4i-3)\binom{2i-2}{i-1} - 24\binom{2i-2}{i} - 2^{2i} \]
    coming from Case I.
    
    \textbf{Case II:} The ramified point over $q_3$ is on $E$. In this case, the genericity of $[E,y,z]$ implies that there is another ramification point on $E$ that is not mapped to $q, q_1, q_2$ or $q_3$. 
    
    This implies again that $\mathrm{deg}(\pi_{|C}) = 2i-s-1$, the ramification order at $x$ is $2i-2s-1$ and $\pi_{|E}$ has ramification profiles $b_1 = (2i-2s-1,1), b_2 = b_3 = (2,\ldots, 2)$, $b_4=b_5=(2,1,\ldots,1)$ with $0\leq s \leq i-2$.  
    
      Hence, the contribution coming from Case II is 
    \[ \frac{2}{2i-1}\sum_{s=0}^{i-2}(4i-4s-2)(2i-2s-1)\binom{2i-1}{s}\]  
    which is furthermore equal to 
    \[ 2^{2i}-\frac{4}{2i-1}\binom{2i-1}{i}\] 
    
    \textbf{Case III:} The ramified point over $q_3$ is on $C$. In this case, it follows that $\mathrm{deg}(\pi_{|C}) = 2i-s-1$, the order at $x$ is $2i-2s-1$ and the point $z$ is on the rational component collapsing to a point of $E$. The map $\pi_E$ has ramification profiles $b_1 = (2i-2s-1,1), b_2 = b_3 = (2,2,\ldots,2), b_4= b_5 = (2,1,\ldots,1)$ with the points over $q_1$ being both generic. The same method as in the proof of Proposition \ref{elliptic2points} and Proposition \ref{ellipticunramified2pts} can be applied here and we see that such a map can be considered in $12$ ways. Moreover, for every map $\pi_{|C}$ the simple branch point $q_3$ can be chosen in $6i-6$ ways. 
    
    It follows that the contribution in this case is 
    \[ \frac{24(i-1)}{2i-1}\sum_{s=0}^{i-1} (2i-2s-1)\binom{2i-1}{s} = 24(i-1)\binom{2i-2}{i-1}\]
    
    Adding together the three possible cases we get a contribution of 
    \[ 20(i-1)\binom{2i-1}{i}\]
\end{proof} 

In fact, one can alternatively prove Proposition \ref{2prym} and Proposition \ref{prymunram} by considering the fiber over the point $[C\cup_{x \sim y }E, \eta_C, \eta_E, z]$ with $[C,x] \in\mathcal{M}_{g-1,1}$ and $[E,y,z] \in \mathcal{M}_{1,2}$ generic. Similarly for Proposition \ref{tripleram} we can consider the fibre over a generic point $[C/x\sim y, \eta]$ in $\Delta_0''$ to conclude our result. In order to complete this count we need the following result: 

\begin{prop}\label{enumerativeMg} Let $1\leq k \leq i-1$ and $[C,x,y] \in \mathcal{M}_{2i-2,2}$ generic. Then the number of pairs $(L,z)\in G^1_{i+k}(C)\times C$ such that 
\[ h^0(C, L\otimes\mathcal{O}_C(-3z)) \geq 1, \  \mathrm{and} \ h^0(C, L\otimes\mathcal{O}_C(-kx-ky)) \geq 1 \]
with $z \in C\setminus \left\{x,y\right\}$ is equal to $2a(i+k-1,2i-2)+2a(i+k,2i-2)+e(i+k,2i-2)$ if $k \neq 1$ and $2a(i+1,2i-2) +e(i+1,2i-2)$ if $k=1$. Here, $a(d,g)$ and $e(d,g)$ are as before: 
\[a(d,g) \coloneqq (2d-g-1)\cdot\frac{g!}{d!(g-d+1)!} \ \mathrm{and} \ e(d,g) \coloneqq 8\frac{g!}{(d-3)!(g-d+2)!} - 8\frac{g!}{d!(g-d-1)!} \]
\end{prop}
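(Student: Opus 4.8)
The plan is to use the same degeneration strategy as in Proposition \ref{tripleram}: the quantity we want is a limit of a count on a family of curves, and we compute it by specializing $[C,x,y]$ to a nodal curve and analyzing the admissible covers (equivalently, limit linear series) that appear. Concretely, I would apply Proposition 5.4 in \cite{Lian19} directly, as was done in Proposition \ref{2i-2s,3}, taking the chain-type degeneration of $[C,x,y]$ that puts $x$ and $y$ on distinguished components and the triple point $z$ somewhere among the boxes of a generic distribution. The target count splits according to whether $z$ lands in the same box as one of $x, y$ or in a box of its own; this is the bookkeeping device that produces the $a$-terms versus the $e$-term in the stated formula. The exponents $i+k-1$ and $i+k$ in the $a$-terms come from the two possible vanishing sequences at the node when $z$ shares a box, exactly as the pair $(s-1,2i-s)$, $(s,2i-s-1)$ arose in Proposition \ref{2i-2s,3}.

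First I would set up the moduli of limit linear series of degree $i+k$ and dimension $1$ on the nodal degeneration, imposing the incidence conditions $h^0(L(-3z))\geq 1$ and $h^0(L(-kx-ky))\geq 1$, and identify which aspects of the Eisenbud–Harris theory (or the Hurwitz-space / admissible-cover reformulation) are needed so that Lian's Proposition 5.4 applies with the correct transversality. Second, I would enumerate the combinatorial types: the component carrying $x$, the component carrying $y$, and the component carrying the triple point $z$, with the remaining simple ramification distributed generically. When $z$ lies on its own component, the local count is governed by the number $e(i+k, 2i-2)$ of pencils with a triple point plus an additional fixed-divisor condition, which is precisely the count recorded in \cite{KodevenHarris1984} and reused in Proposition \ref{tripleram}, Case II. When $z$ shares a component with $x$ (or with $y$), the ramification sequence at the gluing node drops by one, the triple-point and the $k$-fold conditions combine on that component, and one gets the two neighboring binomial coefficients packaged in $2a(i+k-1,2i-2)+2a(i+k,2i-2)$ — with the factor $2$ accounting for the symmetric roles of $x$ and $y$. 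Third, I would treat the boundary case $k=1$ separately: when $k=1$ the condition $h^0(L(-x-y))\geq 1$ becomes the condition that $x$ and $y$ lie in a common fiber, which is weaker, and one of the would-be vanishing sequences at the node becomes non-admissible (the analogue of the $s=0$ or $s=i-1$ degenerations seen repeatedly above), so the two $a$-terms collapse to a single $2a(i+1,2i-2)$ and the $e$-term survives unchanged.

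The main obstacle I anticipate is the transversality and multiplicity bookkeeping rather than the enumeration itself: one must check that each limiting admissible cover is counted with the correct local multiplicity (as in the "multiplicity $2$" analyses in Proposition \ref{elliptic} and the complete-local-ring computations in Proposition \ref{prymunram}), and that no unexpected limit — e.g. the triple point colliding with $x$ or $y$, or a component of the degeneration carrying an unstable configuration — contributes. Verifying that the incidence conditions remain independent in the limit, so that the degree of the relevant zero-dimensional cycle is preserved, is where the real work lies; once that is settled, the formula follows by summing the local contributions and invoking the combinatorial identities of \cite{Bud-adm}, Proposition 2.7, together with the Schubert-calculus evaluations (Example 14.7.11 in \cite{Fulton-Intersection}) already used in Proposition \ref{2i-2s,3}.
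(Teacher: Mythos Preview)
Your overall strategy --- degenerate and count limit covers --- is sound, but the paper takes a much simpler route than the chain degeneration you propose. Instead of placing $x$ and $y$ on separate components of a chain and invoking Lian's Proposition~5.4, the paper specialises $[C,x,y]$ to the single point $[C'\cup_{t\sim z}\mathbb{P}^1,\,x,y]$ with $[C',t]\in\mathcal{M}_{2i-2,1}$ generic and \emph{both} marked points $x,y$ on the rational tail $[\mathbb{P}^1,x,y,z]\in\mathcal{M}_{0,3}$. This packages the entire $(k,k)$--condition onto the rational component, so the $C'$-aspect of any limit $g^1_{i+k}$ carries only a single ramification condition at the node $t$ together with (possibly) the triple point. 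The numbers $a(\cdot,2i-2)$ and $e(\cdot,2i-2)$ then appear directly as the already-known counts of such pencils on a generic one-pointed curve, exactly as they did in Proposition~\ref{tripleram}. In particular the three summands correspond to the position of the triple point relative to the node, and the factors of $2$ come from the combinatorics of the rational map on the tail, not from an $x\leftrightarrow y$ symmetry between separate components as you suggest.

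Your approach could in principle be carried through, but it is substantially more bookkeeping: separating $x$ and $y$ onto distinct boxes forces you to track two independent $k$-fold base-point conditions across the chain and then reconcile the resulting Schubert products with the closed-form $a$ and $e$ numbers, whereas the paper's degeneration makes that reconciliation immediate. Your heuristic for the $k=1$ case (one vanishing sequence becoming inadmissible) is the right kind of explanation, but in the paper's picture the disappearance of the $2a(i,2i-2)$ term is seen directly on the rational tail rather than inferred from a chain analysis.
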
	
\begin{proof} The proof follows by degenerating over a point $[C\cup_{t \sim z }\mathbb{P}^1,x,y]$ with $[C,t]\in \mathcal{M}_{2i-2,1}$ generic and $[\mathbb{P}^1,x,y,z] \in \mathcal{M}_{0,3}$.
\end{proof}
Our claim about Proposition \ref{tripleram} follows as we have the identity 
\[4\sum_{s=0}^{i-2}a(2i-s-1,2i-2)+4\sum_{s=0}^{i-1}a(2i-s,2i-2) +2\sum_{s=0}^{i-1}e(2i-s,2i-2) = 24(i-1)\binom{2i-1}{i}\]
\section{Intersections with test curves} \label{sec-intwithtestcurv}
We recall the test curves we considered in \cite{Bud-adm}, appearing also in \cite{Carlos}. Such test curves on $\overline{\mathcal{R}}_g$ can be obtained by pullback of classical ones on $\overline{\mathcal{M}}_g$, which can be found in the literature in \cite{ModHM} and \cite{Mul}. Because $\mathrm{Pic}_\mathbb{Q}(\overline{\mathcal{R}}_g)$ is generated by $\lambda$ and the boundary divisors, see  \cite[Theorem A, Theorem B]{putman} and \cite[Theorem 2.3.1]{MiraBern}, it is enough to describe the intersection of the test curves with this basis.
\subsection{Test curve $A$} We consider the test curve $A$ in $\overline{\mathcal{M}}_g$ consisting of a generic genus $g-1$ curve $C$ glued at a generic point $x$ to a pencil of elliptic curves along a base point. Pulling back the curve $A$ to $\overline{\mathcal{R}}_g$ we obtain three test curves $A_{g-1}, A_1, A_{1:g-1}$, contained in the divisorial components $\Delta_{g-1}, \Delta_1$ and $\Delta_{1:g-1}$ respectively. We have the following intersection numbers with the standard basis of $\mathrm{Pic}_\mathbb{Q}(\overline{\mathcal{R}}_g)$, where the omitted intersections are 0: 
\[ A_{g-1} \cdot \lambda = 1,\  A_{g-1} \cdot \delta_0' = 12, \ A_{g-1} \cdot \delta_{g-1} = -1 \]
\[ A_1 \cdot \lambda = 3, \ A_1 \cdot \delta_0'' = 12, \ A_1 \cdot \delta^{\mathrm{ram}}_0 = 12, \ A_1 \cdot \delta_1 = -3  \]
\[ A_{1:g-1} \cdot \lambda = 3, \ A_{1:g-1} \cdot \delta_0' = 12, \ A_{1: g-1} \cdot \delta^{\mathrm{ram}}_0 = 12, \ A_{1: g-1} \cdot \delta_{1: g-1} = -3 \]

\subsection{Test curve $B$} Let $[C,x] \in \mathcal{M}_{g-1,1}$ generic. The test curve $B$ on $\overline{\mathcal{M}}_g$ is obtained by glueing the point $x$ to a point $y$ moving on the curve. As before, the pullback provides three test curves $B'$, $B''$ and $B^{\mathrm{ram}}$ contained in the divisors $\Delta_0'$, $\Delta''_0$ and $\Delta_0^{\mathrm{ram}}$ respectively. We have the following intersection numbers, the ones omitted being 0: 
\[ B' \cdot \delta_0' = (1-g)(2^{2g}-4), \ B' \cdot \delta_{g-1} = 2^{2g-2}-1, \ B' \cdot \delta_{1: g-1 } =  2^{2g-2}-1 \]
\[ B'' \cdot \delta_1 = 1, \  B'' \cdot \delta_0'' = 2-2g  \] 
\[ B^{\mathrm{ram}} \cdot \delta_0^{\mathrm{ram}} = 2^{2g-2}(1-g), \ B^{\mathrm{ram}} \cdot \delta_1 = 1, \ B^{\mathrm{ram}} \cdot \delta_{1: g-1} = 2^{2g-2}-1 \] 

\subsection{Test curves $C_i$} Let $i$ be an integer satisfying $2\leq i \leq g-1$ and consider two generic curves $[C] \in \mathcal{M}_i$ and $[D,y] \in \mathcal{M}_{g-i,1} $. Let $\eta_C \in \mathrm{Pic}(C)[2]\setminus \left\{0\right\}$ and $\eta_D \in \mathrm{Pic}(D)[2]\setminus \left\{0\right\}$ and consider the test curves in $\overline{\mathcal{R}}_g$ given as
\[[C\cup_{x \sim y } D, (\eta_C, \mathcal{O}_D)]_{x\in C} \]  
\[[C\cup_{x \sim y } D, (\mathcal{O}_C, \eta_D)]_{x\in C} \]  
\[[C\cup_{x \sim y } D, (\eta_C, \eta_D)]_{x\in C} \] 
by varying $x$ along $C$. We denote them $C^i_{i}, C^i_{g-i} $ and $C^i_{i: g-i}$ respectively and it is clear they are  contained in the divisors $\Delta_i, \Delta_{g-i}$ and $\Delta_{i:g-i}$ respectively. The intersection numbers are the following, where all omitted intersection numbers are 0: 
\[ C^i_i \cdot \delta_i = 2-2i \]
\[ C^i_{g-i} \cdot \delta_{g-i} = 2-2i \]
\[ C^i_{i: g-i} \cdot \delta_{i\colon g-i} = 2-2i \] 

\section{The class of the divisor $\overline{D}(\mu;3)$} \label{finalsection}

Using the enumerative results we provided in Section \ref{ellipticsection} and Section \ref{genericcurvesection}, we are now able to compute the intersection of the divisor $\overline{D}(\mu;3)$ with some of the test curves outlined before. 
We will closely follow the treatment appearing in Proposition 4.3, Proposition 4.4 and Proposition 4.5 in \cite{Bud-adm} as the methods we will use are very similar.
\subsection{Intersection with test curves of type $A$} The genericity assumption in the definition of the test curve $A$ immediately imply the following: 
\begin{prop}
	We have the intersection numbers 
	\[\overline{D}(\mu;3)\cdot A_1 =\overline{D}(\mu;3)\cdot A_{g-1}=  \overline{D}(\mu;3)\cdot A_{1:g-1} = 0 \]
\end{prop}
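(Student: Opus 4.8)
The claim is that the Hurwitz divisor $\overline{D}(\mu;3)$ is disjoint from the three test curves $A_1$, $A_{g-1}$, and $A_{1:g-1}$, each of which is obtained by gluing a fixed generic pointed curve $[C,x]\in\mathcal{M}_{g-1,1}$ to a pencil of plane cubics at a base point of the pencil, then decorating with the appropriate torsion data. My plan is to argue this by a dimension/genericity count: a point of $\overline{D}(\mu;3)$ lying on one of these test curves would force the existence of a degree $2i$ admissible cover $\pi\colon X\to\Gamma$ with prescribed ramification $(2,\ldots,2),(2,\ldots,2),(3,1,\ldots,1)$ over $q_1,q_2,q_3$ (and simple ramification elsewhere), whose stabilized source curve is $C\cup_{x\sim y}E$ for $E$ some member of the elliptic pencil, carrying a $2$-torsion bundle of the relevant boundary type. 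The point is that the $C$-component is \emph{fixed} and generic, whereas admissible cover theory would equip $C$ (or a partial normalization of it) with a pencil satisfying ramification constraints that a general curve of genus $g-1$ does not carry.

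\textbf{Key steps.} First I would invoke the structure of admissible covers over the boundary point $[C\cup_{x\sim y}E,\ast]$, exactly as set up in the proofs of Proposition \ref{tripleram}, Proposition \ref{2prym}, and Proposition \ref{prymunram}: the target degenerates to $\mathbb{P}_1\cup_q\mathbb{P}_2$, the source splits with $\pi_{|C}$ of some degree $d$ and $s$ extra points of $C$ in the fiber of the node, with $2i-1\le d+s\le 2i$, and the triple point (resp. the relevant special point) must lie on $C$ or on $E$ but not on a collapsed rational component. Second, in each of the resulting finitely many combinatorial types, the restriction $\pi_{|C}$ becomes a pencil $L\in W^1_d(C)$ subject to a vanishing condition at $x$ of order $2i-2s$ or $2i-2s-1$, possibly with an additional triple ramification point elsewhere — i.e. precisely the data enumerated in Proposition \ref{2i-2s,3} and Proposition \ref{enumerativeMg}. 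Third — and this is the crux — I would observe that for $A$-type curves the curve $C$ is fixed \emph{and} the elliptic tail varies in a pencil, so the image $\overline{D}(\mu;3)\cap A_\bullet$ would be a \emph{positive-dimensional} (one-parameter) family of admissible covers all restricting to pencils on the \emph{same} fixed curve $C$; since $C$ carries only finitely many such pencils $\pi_{|C}$ (Propositions \ref{2i-2s,3}, \ref{enumerativeMg} give finite counts), the cover would have to be constant along the pencil, which is impossible because the branch divisor on $\mathbb{P}_2$ (hence the $j$-invariant of $E$, hence the moduli point) genuinely moves. Therefore the intersection is empty and all three intersection numbers vanish.

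\textbf{Main obstacle.} The delicate part is ruling out \emph{every} degeneration type cleanly, i.e. making sure no combinatorial configuration of the admissible cover allows the fixed generic $C$ to carry only a linear series that imposes no conditions — for instance a configuration where $\pi_{|C}$ has very small degree and the bulk of the ramification is absorbed by $E$ and collapsed rational bridges. This is handled by the same Riemann–Hurwitz bookkeeping used in Proposition \ref{tripleram}: the inequality $2i-1\le\deg(\pi_{|C})+s\le 2i$ forces $\deg(\pi_{|C})\ge i$ (roughly), so $\pi_{|C}$ is a genuine pencil on $C$ of the bounded degree appearing in Proposition \ref{2i-2s,3}, and a general curve of genus $g-1=2i-2$ has only finitely many such, none of which can vary in a one-parameter family. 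One should also check the torsion-bundle condition is automatically consistent on the boundary (the $\eta$ on the $C$-side being trivial, $\mathcal{O}_C$, for $A_{g-1}$ and $A_{1:g-1}$, and $\eta_E$ living on the moving elliptic tail), which it is by construction of these test curves; but this imposes no further constraint and does not affect the dimension count. Hence the genericity of $[C,x]$ suffices and the proposition follows.
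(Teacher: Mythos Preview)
Your argument has a genuine gap at the step where you derive the contradiction. You correctly set up the admissible-cover picture and correctly observe that the restriction $\pi_{|C}$ must be one of finitely many pencils on $C$. But your conclusion ``the cover would have to be constant along the pencil, which is impossible because the $j$-invariant of $E$ genuinely moves'' does not follow: an admissible cover over $[C\cup_{x\sim y}E_t]$ is built by gluing a fixed $\pi_{|C}$ to a map $\pi_{|E_t}$, and the latter \emph{does} vary in a one-parameter family as $E_t$ moves (this is exactly what Proposition~\ref{elliptic} and its analogues say: the elliptic Hurwitz space surjects onto $\overline{\mathcal M}_{1,1}$). So fixing $\pi_{|C}$ and letting $\pi_{|E_t}$ run produces a genuine curve in $\overline{H}_{\mu;3}$ mapping onto $A_\bullet$, and there is no contradiction. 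In short, your argument would only rule out $A_\bullet\subset\overline{D}(\mu;3)$ if the existence of a single $\pi_{|C}$ were already impossible---and that is precisely the point you need to prove, not a consequence of finiteness.

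The correct (and much shorter) argument is the one the paper summarizes as ``genericity'': in every combinatorial type the restriction $\pi_{|C}$ must have prescribed ramification \emph{at the fixed node $x$} (e.g.\ $x$ is one of the $i$ preimages of $q$ in Case~I of the $C^{g-1}_{1:g-1}$ computation, or the order-$4$ point in Case~II, or a point of order $2i-2s$ in the $C^{g-1}_1$ computation). Each such condition singles out a \emph{finite} set of points on $C$; since $x\in C$ is generic, it lies in none of them, and hence no admissible cover exists over any point of $A_\bullet$. Note that Proposition~\ref{2i-2s,3}, which you cite, counts pencils with the high-order point \emph{varying} on $C$ (that is what makes the count positive); once $x$ is fixed generic the adjusted Brill--Noether number drops by one and the count is zero. (Incidentally, $g-1=2i-1$, not $2i-2$.)
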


\subsection{Intersection with test curves of type $C_{g-1}$} In this subsection we will compute the intersection of our divisor $\overline{D}(\mu;3)$ with the test curves $C^{g-1}_{1:g-1},C^{g-1}_{g-1}$ and $C^{g-1}_{1}$. 

\begin{prop}
	We have the intersection: 
	\[ \overline{D}(\mu;3)\cdot C^{g-1}_{1:g-1} = (6i-4)!\cdot 48(i-1)(i+1)\binom{2i-1}{i}   \]
\end{prop}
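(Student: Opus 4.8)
The plan is to compute the intersection number by analyzing which admissible covers in $\overline{H}_{\mu;3}$ map, via $c_{\mu;3}$, into the test curve $C^{g-1}_{1:g-1} = [C\cup_{x\sim y}D,(\eta_C,\eta_D)]_{x\in C}$, where $[C]\in\mathcal{M}_{g-1}$, $[D,y]\in\mathcal{M}_{1,1}$ are generic and $\eta_C,\eta_D$ are the fixed nonzero 2-torsion bundles. Since $g=2i$ here and the elliptic tail $D$ carries a nontrivial 2-torsion bundle $\eta_D$, an admissible cover $\pi\colon X\to\Gamma$ over a point of this test curve must restrict on $D$ to a map $\pi_{|D}$ for which $\mathcal{O}_D\bigl(\tfrac{\pi_{|D}^*(q_1)-\pi_{|D}^*(q_2)}{2}\bigr)\cong\eta_D$; this forces $q_1$ and $q_2$ to lie on the target $\mathbb{P}_2$ of $\pi_{|D}$ (otherwise the Prym condition on $D$ would force the trivial bundle). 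As in the proofs of Proposition \ref{tripleram} and Proposition \ref{2prym}, I would let $s$ denote the number of points on $C$ in the fiber through the node $x$, obtaining $2i-1\le\deg(\pi_{|C})+s\le 2i$, and then split into the two cases according to the location of the triple ramification point.

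The case analysis should mirror Proposition \ref{tripleram} almost verbatim, with the roles of the components exchanged appropriately. In the case where the triple point lies on the elliptic curve $D$: then $\pi_{|D}$ has degree $2i-2s$ with profiles $(2i-2s-1,1),(2,\dots,2),(2,\dots,2),(3,1,\dots,1)$ over $q,q_1,q_2,q_3$ satisfying the Prym condition on $D$, which by Proposition \ref{elliptic} can be done in $4$ ways; and $\pi_{|C}$ has degree $2i-s-1$ with a point of order $2i-2s-1$ at $x$, which by the combinatorial counts used in \cite{Bud-adm} (the function $a(d,g)$) can be done in $a(2i-1-s,2i-1)$ ways, since now $C$ has genus $g-1=2i-1$. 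In the case where the triple point lies on $C$: then $\pi_{|D}$ has profiles $(2i-2s),(2,\dots,2),(2,\dots,2),(2,1,\dots,1)$ which by Proposition 2.5 of \cite{Bud-adm} can be realized in $6$ ways, divided by $3$ for the Prym condition on $D$ giving $2$; and $\pi_{|C}$ has degree $2i-s$ with a point of order $2i-2s$ at $x$ and a triple point elsewhere, counted by $e(2i-s,2i-1)$. One must also track multiplicities arising from the local structure of the Hurwitz scheme at these admissible covers (nodes of ramification index forcing a factor of $2$, the automorphism of the elliptic tail, etc.), exactly as in Proposition \ref{tripleram}; and finally multiply by the contribution $(6i-4)!$ coming from the ordering of the simple branch points and by $C^{g-1}_{1:g-1}\cdot\delta_{1:g-1} = 2-2i$ being absorbed into how the test curve sweeps — actually the intersection number is just the total count of such covers weighted by multiplicity, since $c_{\mu;3}$ maps the boundary curve onto $C^{g-1}_{1:g-1}$ with the relevant degree.

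Assembling the two contributions should give a sum of the form $(6i-4)!\cdot\bigl(\alpha\sum_{s}a(2i-1-s,2i-1)+\beta\sum_{s}e(2i-s,2i-1)\bigr)$, and the main obstacle will be verifying that this collapses to the closed form $(6i-4)!\cdot 48(i-1)(i+1)\binom{2i-1}{i}$. This requires the binomial summation identities of the type proved in \cite{Bud-adm}, Proposition 2.7, applied with genus $2i-1$ rather than $2i-2$; I expect the relevant telescoping identities for $\sum a(2i-1-s,2i-1)$ and $\sum e(2i-s,2i-1)$ to be straightforward variants, but getting the precise constants and the $(i-1)(i+1)=i^2-1$ factor to appear correctly — rather than, say, $(i-1)i$ or $i^2$ — will be the delicate bookkeeping step, and is where I would concentrate the verification effort. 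The genericity of $[C]$ and $[D,y]$ ensures transversality throughout, so no excess-intersection corrections are needed.
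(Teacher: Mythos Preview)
Your setup contains a fundamental error in locating $q_1$ and $q_2$, and this changes the entire structure of the computation.

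You argue that the Prym condition on the elliptic tail $D$ forces both $q_1$ and $q_2$ onto $\mathbb{P}_2$. This is false: the twist at the node $q$ replaces, on the $D$-side, a branch point lying on $\mathbb{P}_1$ by the node $q$ itself, so if $q_1\in\mathbb{P}_1$ and $q_2\in\mathbb{P}_2$ the induced bundle on $D$ is $\mathcal{O}_D\bigl(\tfrac{\pi_{|D}^*(q)-\pi_{|D}^*(q_2)}{2}\bigr)$, which is perfectly capable of being the nontrivial $\eta_D$. Worse, your placement is \emph{incompatible} with the condition on $C$: if $q_1,q_2\in\mathbb{P}_2$ then the divisor $\pi^*(q_1)-\pi^*(q_2)$ has no support over $\mathbb{P}_1$, the twist contributes nothing, and the induced bundle on $C$ is $\mathcal{O}_C$. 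But on $C^{g-1}_{1:g-1}$ the bundle $\eta_C$ is \emph{nontrivial}. So no admissible cover with $q_1,q_2\in\mathbb{P}_2$ can map into this test curve. The correct conclusion (and what the paper uses) is that $q_1$ and $q_2$ lie on \emph{different} components of $\Gamma$.

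Once $q_1\in\mathbb{P}_1$ and $q_2\in\mathbb{P}_2$, the geometry is entirely different from what you describe. There is no parameter $s$ to sum over: $\pi_{|C}$ has full degree $2i$ with profiles $(2,\ldots,2)$ over $q_1$ and over $q$, and the Prym condition $\mathcal{O}_C\bigl(\tfrac{\pi_{|C}^*(q_1)-\pi_{|C}^*(q)}{2}\bigr)\cong\eta_C$ must hold on the genus $2i-1$ curve $C$. This is precisely the situation of Proposition~\ref{tripleram}, and the count $24(i-1)\binom{2i-1}{i}$ enters directly rather than through the classical (non-Prym) numbers $a(d,g)$ and $e(d,g)$. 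In Case~I (triple point on $C$) the elliptic map has degree $2$ and the node $x$ ranges over the $i$ points of the fibre above $q$, giving $2\cdot i\cdot 24(i-1)\binom{2i-1}{i}$; in Case~II (triple point on $D$) the elliptic map has degree $4$ with profiles $(4),(2,2),(3,1),(2,1,1)$ and $\pi_{|C}$ has profile $(4,2,\ldots,2)$ over $q$, contributing $48(i-1)\binom{2i-1}{i}$. These sum to $48(i-1)(i+1)\binom{2i-1}{i}$ with no binomial identities needed. Your proposed sums $\sum_s a(2i-1-s,2i-1)$ and $\sum_s e(2i-s,2i-1)$ are the ingredients for the test curve $C^{g-1}_1$ (where $\eta_C=\mathcal{O}_C$), not $C^{g-1}_{1:g-1}$, and even there you would need the moving-point versions of these counts since $x$ varies along $C$.
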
 
\begin{proof}
	Consider an admissible cover $\pi\colon X\rightarrow \Gamma$ mapped to a point of $C^{g-1}_{1:g-1}$. The factor $(6i-4)!$ appear as we do not forget the order of the $6i-4$ simple branch points of the admissible cover. For simplicity, we will assume in the computation that these branch points are unordered. 
	We distinguish two cases depending on the position of the point of ramification order 3. 
	
	\textbf{Case I:} The point of ramification order $3$ is on $C$. In this case, the twist and the genericity conditions imply that $q_1$ and $q_2$ are on different rational components of the target. We assume that $q_1$ is on $\mathbb{P}_1$, the target of $\pi_{|C}$. In this case, $\mathrm{deg}(\pi_{|E}) = 2$ and $\pi_{|E}$ is uniquely determined. Moreover, the map $\pi_{|C}$ is a degree $2i$ map with special ramification profiles $(2,\ldots,2), (2,\ldots,2)$ and $(3,1,\ldots,1)$ above three points $q_1, q$ and $q_3$ and the node $x$ is one of the $i$ points in the fibre above $q$. We get in this way a contribution of $2\cdot i\cdot 24(i-1)\binom{2i-1}{i}$. The factor of $2$ appears as we can interchange the roles of $q_1$ and $q_2$.   
	
	\textbf{Case II:} The point of ramification order $3$ is on $E$. Again we assume $q_1$ to be on $\mathbb{P}_1$ and $q_2$ on the other component. In this case, the map $\pi_{|E}$ is of degree $4$ with ramification profiles $(4), (2,2), (3,1)$ and $(2,1,1)$ above $q, q_2, q_3$ and another simple branch point. The map $\pi_{|C}$ is a degree $2i$ map with special ramification profiles $(4,2,\ldots,2)$ and $(2,\ldots,2)$ over $q$ and $q_1$. We showed in Proposition 2.6 in \cite{Bud-adm} that the number of such maps $\pi_{|E}$ is $3$ and clearly $\pi_{|E}$ is not invariant under the automorphism of $[E,y]$. The number of such maps $\pi_{|C}$ satisfying the twist condition
	\[ \eta_C \cong \mathcal{O}_C(\frac{\pi_C^*(q_1)-\pi_C^*(q)}{2})\]
	is immediately observed from \cite{Mul} Section 2.6 to be $4(i-1) \binom{2i-1}{i}$. 
	As a consequence, such admissible covers appear in the count with multiplicity $2$ and we get a contribution number of 
	\[ 2\cdot 2\cdot4\cdot(i-1)\binom{2i-1}{i}\cdot 3 = 48\cdot(i-1)\binom{2i-1}{i} \] 
	Adding the two terms together we get  $48\cdot(i-1)(i+1)\binom{2i-1}{i}$ as required.
\end{proof}

In a completely similar fashion we can compute the intersection of the divisor with $\overline{D}(\mu;3)$ with $C^{g-1}_{g-1}$. 

\begin{prop}
We have the intersection \[ \overline{D}(\mu;3) \cdot C_{g-1}^{g-1} = (6i-4)!\cdot24\cdot(i-1)(6i+2) \binom{2i-1}{i} \]
\end{prop}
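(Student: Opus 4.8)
The argument runs in parallel with the proof of the previous proposition and with Propositions 4.3--4.5 of \cite{Bud-adm}. I would analyze the admissible covers $\pi\colon X\to\Gamma$ lying over a generic point $[C\cup_{x\sim y}E,\eta_C,\mathcal{O}_E]$ of the test curve $C^{g-1}_{g-1}$, where $[C]\in\mathcal{M}_{2i-1}$ is generic, $\eta_C\in\mathrm{Pic}(C)[2]\setminus\{0\}$ and $[E,y]\in\mathcal{M}_{1,1}$ is generic. Writing $\Gamma=\mathbb{P}_1\cup_q\mathbb{P}_2$ with $\pi|_C\colon C\to\mathbb{P}_1$ and $\pi|_E\colon E\to\mathbb{P}_2$, the factor $(6i-4)!$ once again records the ordering of the $6i-4$ simple branch points, and I would carry out the count with these points unordered, exactly as in the previous proposition. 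The one structural novelty compared with the computation of $\overline{D}(\mu;3)\cdot C^{g-1}_{1:g-1}$ is that the Prym line bundle is \emph{trivial} on the elliptic tail: the twist condition therefore forbids the configuration in which $q_1$ and $q_2$ both lie over $\mathbb{P}_2$, but, in contrast with that case, it now \emph{permits} the configuration in which $q_1$ and $q_2$ both lie over $\mathbb{P}_1$. Hence one gets two families of configurations, $\{q_1,q_2\subset\mathbb{P}_1\}$ and $\{q_1\subset\mathbb{P}_1,\ q_2\subset\mathbb{P}_2\}$ (together with its mirror), each to be refined according to whether the triple ramification point lies on $C$, on $E$, or on a contracted rational bridge, according to the degrees $\deg\pi|_C$ and $\deg\pi|_E$, and according to the size $s$ of the cluster of rational bridges through the node, in the sense of \cite{Bud-adm}.

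In each configuration the restriction $\pi|_E$ is an elliptic cover of precisely one of the types enumerated in Proposition \ref{elliptic}, Proposition \ref{elliptic2points} and Proposition \ref{ellipticunramified2pts}, so its contribution --- after imposing $\mathcal{O}_E(\frac{\pi|_E^{*}(q_1)-\pi|_E^{*}(q_2)}{2})\cong\mathcal{O}_E$ when both $q_1,q_2$ lie on $\mathbb{P}_2$, or the corresponding triviality condition at the node when only one of them does --- is known; and the restriction $\pi|_C$ is either a Prym--Hurwitz cover of the generic pair $[C,\eta_C]\in\mathcal{R}_{2i-1}$, counted by Proposition \ref{tripleram}, or a Hurwitz cover of $C$ with prescribed ramification at the node and possibly a further triple point, counted by the twist counts of \cite{Mul}, \S2.6, by Proposition \ref{2i-2s,3}, and by the functions $a(d,g)$ and $e(d,g)$ of \cite{KodevenHarris1984} and \cite{Bud-adm}, exactly as in the previous proposition. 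For each admissible cover I would extract the multiplicity from the complete local ring of the Hurwitz scheme and from its automorphism group --- in particular the factor $2$ whenever the elliptic involution of $[E,y]$ does not lift to an automorphism of $\pi|_E$ --- and I would record the combinatorial factor choosing which point of a fibre of $\pi|_C$ is glued to $E$ (the factor $i$ in the previous proposition, or a factor proportional to the number of simple branch points when the node becomes a simple branch point of $\pi|_C$) and the symmetric factor exchanging $q_1$ with $q_2$.

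Finally I would sum all contributions over $s$ and over the configurations, collapsing the resulting alternating sums of binomial coefficients by the combinatorial identities of \cite{Bud-adm}, Proposition 2.7 --- the same identities used in Proposition \ref{tripleram} and in the identity $4\sum_{s=0}^{i-2}a(2i-s-1,2i-2)+4\sum_{s=0}^{i-1}a(2i-s,2i-2)+2\sum_{s=0}^{i-1}e(2i-s,2i-2)=24(i-1)\binom{2i-1}{i}$ --- to reach $(6i-4)!\cdot 24(i-1)(6i+2)\binom{2i-1}{i}$. I expect the main obstacle to be the organization of the case analysis together with the twist computation: enumerating all configurations of $q_1,q_2,q_3$, of the triple point, and of the cluster of contracted rational bridges, and, the genuinely delicate step, determining for each $X$ of compact type the unique twist of $\mathcal{O}_X(\frac{\pi^{*}(q_1)-\pi^{*}(q_2)}{2})$ at the separating node in order to decide which configurations actually satisfy $\eta|_C\cong\eta_C$ and $\eta|_E\cong\mathcal{O}_E$ and with what local multiplicity. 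Once the admissible covers and their multiplicities are listed correctly, the remaining computation is the routine binomial bookkeeping.
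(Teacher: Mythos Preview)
Your case analysis is miscalibrated at the very first step. You claim that for $C^{g-1}_{g-1}$ both the configuration $\{q_1,q_2\subset\mathbb{P}_1\}$ and the split configuration $\{q_1\subset\mathbb{P}_1,\ q_2\subset\mathbb{P}_2\}$ survive the twist condition. But the split configuration is precisely the one analysed in the preceding proposition for $C^{g-1}_{1:g-1}$, and the twist computation there shows that it always produces a \emph{nontrivial} $2$-torsion bundle on the elliptic tail (for instance, in Case~I of that proof one finds $\eta|_E\cong\mathcal{O}_E(y-w)$ with $w$ the ramification point over $q_2$, which is never trivial). Since the defining condition of $C^{g-1}_{g-1}$ is $\eta|_E\cong\mathcal{O}_E$, the split configuration contributes nothing here. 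Only the configuration with $q_1,q_2\in\mathbb{P}_1$ remains.

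Once that is recognised, almost all of the machinery you list --- Propositions~\ref{elliptic}, \ref{elliptic2points}, \ref{ellipticunramified2pts}, \ref{2i-2s,3}, the sums over the bridge parameter $s$, the $a(d,g)$ and $e(d,g)$ identities --- becomes irrelevant. With both $q_1,q_2$ on $\mathbb{P}_1$ the restriction $\pi|_C$ is forced to have degree exactly $2i$ with profiles $(2,\ldots,2),(2,\ldots,2),(3,1,\ldots,1)$, and the Prym condition on $C$ pins down its count by Proposition~\ref{tripleram} alone as $24(i-1)\binom{2i-1}{i}$. The only remaining dichotomy is whether the triple point of the full cover lies on $C$ or on $E$: in the first case $\pi|_E$ has degree~$2$ and the node $x$ is one of the $6i-6$ simple ramification points of $\pi|_C$; in the second case the node coincides with the triple point of $\pi|_C$, and $\pi|_E$ is a degree-$3$ cover contributing a factor $2\cdot 4$. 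The two contributions $(6i-6)\cdot 24(i-1)\binom{2i-1}{i}$ and $8\cdot 24(i-1)\binom{2i-1}{i}$ add to the stated formula. No $s$-sum and no further combinatorial identities are needed.
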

\begin{proof}
	We consider an admissible cover $\pi\colon X\rightarrow \Gamma$ mapped to a point of the test curve $C^{g-1}_{g-1}$. Again, we will not take into consideration the order of the simple branch points. In this case, both $q_1$ and $q_2$ are on $\mathbb{P}_1$ and the map $\pi_{|C}$ is again of degree $2i$ and with special ramification profiles $(2,\ldots,2), (2,\ldots,2)$ and $(3,1,\ldots,1)$. If the triple point is on $E$, then the triple point of $\pi_{|C}$ is the node joining $C$ and $E$. In this case we get a contribution of $2\cdot 4\cdot 24(i-1)\binom{2i-1}{i}$. 
	If the triple point is on $C$, it follows that $\pi_{|E}$ has degree $2$ and the node joining $C$ and $E$ is one of the $6i-6$ simple ramification points of $\pi_{|C}$. The contribution in this case is $(6i-6)\cdot 24(i-1)\binom{2i-1}{i}$. 
	Adding the two contributions we get the required sum.
\end{proof}

Lastly, we compute the intersection with the test curve $C^{g-1}_1$. We have the following. 

\begin{prop} We have the intersection
	\[\overline{D}(\mu;3)\cdot C^{g-1}_1 = (6i-4)!\cdot 16\cdot (i-1)(12i^2-i+1)\binom{2i-1}{i}\]
\end{prop}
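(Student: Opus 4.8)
The plan is to imitate the proofs of the two preceding propositions. As there, $\overline{D}(\mu;3)\cdot C^{g-1}_1$ equals the length of the cycle $c_{\mu;3}^{*}(C^{g-1}_1)$, and the factor $(6i-4)!$ accounts for the ordering of the $6i-4$ simple branch points, so in what follows I treat these as unordered. I write a general point of $C^{g-1}_1$ as $[C\cup_{x\sim y}D,(\mathcal{O}_C,\eta_D)]$ with $[C]\in\mathcal{M}_{2i-1}$ generic, $[D,y]\in\mathcal{M}_{1,1}$ generic and $\eta_D$ a nonzero $2$-torsion point of $D$. For an admissible cover $\pi\colon X\to\Gamma$ lying over such a point I denote by $\mathbb{P}_1$, $\mathbb{P}_2$ the targets of $\pi_{|C}$, $\pi_{|D}$, by $q$ their common node, and I let $s$ be the number of points of $C$ over $q$ other than the node; as in \cite{Bud-adm}, Proposition 4.3, one has $2i-1\le\deg(\pi_{|C})+s\le 2i$.

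The first and decisive step is to locate $q_1$ and $q_2$. Since $[D,y]$ is generic and $\eta_D$ is nontrivial, at least one of $q_1,q_2$ must lie on $\mathbb{P}_2$. Conversely, suppose one of them lay on $\mathbb{P}_1$. Then, since $\mathcal{O}_X(\tfrac{\pi^{*}q_1-\pi^{*}q_2}{2})$ descends to a line bundle on $C\cup_x D$ restricting to $\mathcal{O}_C$ on $C$ and the only ambiguity at the node shifts the $C$-aspect by an integer multiple of $x$, we would obtain on $C$ a linear equivalence of the form $p_1+\cdots+p_m\sim mx$ or $p_1+\cdots+p_m\sim p'_1+\cdots+p'_m$ with the $p_j$ pairwise distinct reduced points of fibres of $\pi_{|C}$. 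For a generic curve of genus $2i-1\geq 3$ both are impossible: the first would force $C$ to be hyperelliptic or $p_1=\cdots=p_m=x$, while the second would force two reduced fibres of a pencil on $C$ to coincide, i.e.\ $q_1=q_2$. Hence both $q_1$ and $q_2$ lie on $\mathbb{P}_2$, the twist is carried entirely by the elliptic tail, $\pi_{|C}$ is unconstrained by the Prym condition, and $\pi_{|D}$ must satisfy $\mathcal{O}_D(\tfrac{\pi_{|D}^{*}q_1-\pi_{|D}^{*}q_2}{2})\cong\eta_D$. It then remains to organise the count according to whether the triple ramification point sits on $D$, on $C$, or on a rational component that collapses in the stabilization.

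For each of these cases I would split $\pi$ into $\pi_{|C}$, $\pi_{|D}$ and the possible rational bridges, count them separately, and weight each configuration by the multiplicity read from the complete local ring of the Hurwitz scheme of admissible covers, by the factor $2$ coming from interchanging $q_1$ and $q_2$, and by the number of choices for which point of the fibre over $q$ is the node. On the elliptic side, after imposing $\mathcal{O}_D(\tfrac{\pi_{|D}^{*}q_1-\pi_{|D}^{*}q_2}{2})\cong\eta_D$ (which divides the plain count by $3$), the required numbers are furnished by Proposition \ref{elliptic}, Proposition \ref{elliptic2points}, Proposition \ref{ellipticunramified2pts} and \cite{Bud-adm}, Propositions 2.5 and 2.6, exactly as in the proofs of Propositions \ref{tripleram}, \ref{2prym} and \ref{prymunram}. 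On the side of $C$, where there is no Prym condition, $\pi_{|C}$ is an ordinary pencil on the generic genus-$2i-1$ curve $C$ with prescribed ramification at the varying node $x$: when the triple point is off $C$ the relevant count is the classical $a(\deg\pi_{|C},2i-1)$, when the triple point is on $C$ it is the count of Proposition \ref{2i-2s,3}, and when a point is forced onto a collapsing rational component it is a double-ramification count as in Case I of the proof of Proposition \ref{prymunram}.

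Finally I would sum the resulting binomial expressions over $s$ in their admissible ranges and collapse them using combinatorial identities of the type in \cite{Bud-adm}, Proposition 2.7; the total should be $16(i-1)(12i^2-i+1)\binom{2i-1}{i}$, and reinstating the ordering factor $(6i-4)!$ then yields the claimed intersection number. The step I expect to be the main obstacle is the location of $q_1$ and $q_2$: unlike the computations of $\overline{D}(\mu;3)\cdot C^{g-1}_{g-1}$ and $\overline{D}(\mu;3)\cdot C^{g-1}_{1:g-1}$, where the $2$-torsion was nonzero precisely on the component to which one of Propositions \ref{tripleram}--\ref{prymunram} applies verbatim, here the triviality of $\eta$ on $C$ must be converted into the combinatorial restriction above and, more delicately, one must rule out low-degree configurations of $\pi_{|C}$ that would otherwise produce a positive-dimensional family of admissible covers over a point of $C^{g-1}_1$. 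Once the covers are correctly enumerated and weighted, what remains is the bookkeeping of the sums over $s$, which is routine given the results already established.
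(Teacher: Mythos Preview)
Your outline is the paper's proof: place $q_1,q_2$ on the target of the elliptic tail, split according to whether the triple point lies on $C$ or on $E$, feed in Proposition~\ref{2i-2s,3} on the $C$-side in the first case and Proposition~\ref{elliptic} (divided by $3$ for the Prym condition) on the $E$-side in the second, and sum over $s$ via the displayed binomial identities.

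One concrete correction is needed. In the case where the triple point is on $E$, your proposed $C$-input $a(\deg\pi_{|C},2i-1)$ is wrong. In this test curve the node $x$ \emph{varies} along $C$, whereas $a(d,g)$ counts pencils with the prescribed ramification at a \emph{fixed} general point; here $C$ has genus $2i-1$, so $\rho(G^1_{2i-s-1})=2i-2s-3$, the condition $\mathrm{ord}_x\ge 2i-2s-1$ imposes $2i-2s-2$ conditions, and for fixed generic $x$ the expected dimension is $-1$: the count $a(\,\cdot\,,2i-1)$ simply does not apply. What the paper uses instead is the classical number of pairs $(x,L)$ with $L\in G^1_{2i-s-1}(C)$ having ramification order $2i-2s-1$ at the moving point $x$, namely $(2i-2s-2)(2i-2s-1)(2i-2s)\binom{2i-1}{s}$; this, together with the $12/3$ elliptic maps from Proposition~\ref{elliptic} and the multiplicity $2$ coming from the $[E,y]$-automorphism, produces the Case~II contribution $32i(i-1)\binom{2i-1}{i}$.

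Two smaller remarks. Your third case, triple point on a collapsing rational tail, is empty by the same inequality as in the proof of Proposition~\ref{tripleram}: it would force $\deg(\pi_{|C})+s\le 2i-3$, contradicting $\deg(\pi_{|C})+s\ge 2i-1$. And since here $q_1,q_2$ both lie on $\mathbb{P}_2$, the elliptic counts already treat them as labelled, so no additional swap factor of $2$ is inserted (unlike in the $C^{g-1}_{1:g-1}$ computation, where $q_1$ and $q_2$ sit on different components of $\Gamma$).
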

\begin{proof}
	Consider an admissible cover $\pi\colon X\rightarrow \Gamma$ mapped to a point of the test curve $C^{g-1}_1$. For simplicity, we assume the branch points to be unordered. We distinguish two possible cases, depending on whether the triple point is on $C$ or on $E$. 
	
	\textbf{Case I:} The triple point is on $C$. If we denote by $s$ the number of points on $C$ in the same fibre of $\pi_{|C}$ as $x$ we get that $\pi_{|E}$ has ramification profiles $(2i-2s), (2,\ldots,2), (2,\ldots,2)$ and $(2,1,\ldots,1)$ above $q, q_1, q_2$ and a simple branch point. The map $\pi_{|C}$ is of degree $2i-s$ and has order $2i-2s$ at $x$ and order $3$ somewhere else. The number of possible such maps $\pi_{|E}$ is $2$ and they are invariant under the action of the automorphism of $[E,y]$ and the number of such maps $\pi_{|C}$ is described in Proposition \ref{2i-2s,3}. For simplicity, we will denote the number in Proposition \ref{2i-2s,3} by $f(2i-s,2i-1)$. The contribution coming from this case is equal to 
	\[ \sum_{s=0}^{i-1}2\cdot f(2i-s,2i-1) \]
	In order to compute this term, we use the identities: 
	\[ \sum_{s=0}^{i-1}(2i-2s-1)(2i-2s+1)(i-s-1)\binom{2i-1}{s} = (i-\frac{1}{2})\cdot2^{2i-2}+(2i^2-\frac{5}{2}i)\binom{2i-1}{i}\]
	\[ \sum_{s=0}^{i-1}(2i-2s-1)(2i-2s+1)(i-s+1)\binom{2i-1}{s-1} = (2i^2-\frac{5}{2}i+1)\binom{2i-1}{i} - (i-\frac{1}{2})\cdot2^{2i-2}\]
	\[\sum_{s=0}^{i-1}(2i-2s+2)(s-1)(2i-2s-1)(2i-2s+1)\binom{2i-1}{s-1} = (4i^3-5i^2+3i-2)\binom{2i-1}{i}-(8i^2-12i+4)\cdot2^{2i-2}\]
	\[\sum_{s=0}^{i-1}s(2i-2s)(2i-2s-1)(2i-2s+1)\binom{2i-1}{s} =(4i^3-9i^2+5i)\binom{2i-1}{i}\]
	\[\sum_{s=0}^{i-1}(s+1)(2i-2s-2)(2i-2s-1)(2i-2s+1)\binom{2i-1}{s+1} =(4i^3-5i^2+i)\binom{2i-1}{i}+(8i^2-12i+4)\cdot2^{2i-2}\]
	Adding everything together, the contribution in this case is 
	\[ 16(i-1)(12i^2-3i+1)\binom{2i-1}{i}\]
	
	\textbf{Case II:} The triple point is on $E$. In this case, the ramification profiles of $\pi_{|E}$ are $(2i-2s-1,1)$, $(2,\ldots,2)$, $(2,\ldots,2)$ and $(3,1,\ldots,1)$ over $q, q_1,q_2$ and $q_3$. The map $\pi_{|C}$ is of degree $2i-s-1$ and has ramification order $2i-2s-1$ at the point $x$. The number of such maps $\pi_{|C}$ is known to be $(2i-2s-2)(2i-2s-1)(2i-2s)\binom{2i-1}{s}$, while the number of such maps $\pi_{|E}$ is computed in Proposition \ref{elliptic} to be 12. Such maps $\pi_{|E}$ cannot be invariant under the automorphism of $[E,y]$. Consequently, such admissible covers are counted with multiplicity $2$ and we get a contribution of 
	\[ 32\cdot\sum_{s=0}^{i-1}(i-s-1)(2i-2s-1)(i-s)\binom{2i-1}{s} = 32\cdot i(i-1)\binom{2i-1}{i} \]
	
	Adding together the two cases we get a total contribution of $16\cdot (i-1)(12i^2-i+1)\binom{2i-1}{i}$
\end{proof}
\subsection{Intersection with the test curve $B'$} 

We consider the normalization $\nu\colon \overline{H}_{\mu;3}^\nu \rightarrow \overline{H}_{\mu;3}$. Then, the argument of Proposition 4.1 in \cite{Bud-adm} can be adapted to extend the rational map $c_{\mu;3}\circ \nu\colon \overline{H}_{\mu;3}^\nu \rightarrow \overline{\mathcal{R}}_{2i}$ over the locus parametrizing curves $[C/x\sim y]$ where $[C,x]$ is generic in $\mathcal{M}_{g-1,1}$. Consequently, we know all admissible covers that will be mapped to a point of the test curve $B'$. We have the following 
\begin{prop}\label{B'} We have the intersection 
	\[ \overline{D}(\mu;3) \cdot B' = (6i-4)! \cdot (2^{2g-2}-1)\cdot \left[ 24\cdot(2i-1)(i-1)\binom{2i-1}{i}+72\cdot (i-1)\binom{2i-1}{i} \right]  \]
\end{prop}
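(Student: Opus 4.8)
The plan is to compute $\overline{D}(\mu;3)\cdot B'$ by enumerating, with multiplicities, the admissible covers in $\overline{H}_{\mu;3}^\nu$ that $c_{\mu;3}\circ\nu$ maps into the test curve $B'$, and to recognise the resulting count in terms of the Prym enumerative number of Proposition \ref{tripleram}. First I would reduce to a one--parameter count. A generic point of $B'$ is of the form $[C/(x\sim y),\eta]$ with $[C,x]\in\mathcal{M}_{g-1,1}$ fixed and generic, $y$ varying along $C$, and $\eta$ restricting to a fixed nonzero $2$--torsion bundle $\eta_C$ on $C$. The $2^{2g-2}-1$ nonzero choices of $\eta_C$ are permuted transitively by monodromy and contribute equally, which produces the factor $2^{2g-2}-1$, while the factor $(6i-4)!$ is again the number of orderings of the simple branch points. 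Ignoring both, it remains to count, with the multiplicities dictated by the local structure of $\overline{H}_{\mu;3}^\nu$, the admissible covers sent into the complete curve $B'_{\eta_C}=\{[C/(x\sim y),\eta]\ :\ y\in C\}$ for fixed $[C,x]$ and $\eta_C$, and to show the total equals $24(2i-1)(i-1)\binom{2i-1}{i}+72(i-1)\binom{2i-1}{i}$.

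Next I would describe the covers themselves. Using the extension of $c_{\mu;3}\circ\nu$ over this locus — the adaptation of Proposition 4.1 in \cite{Bud-adm} quoted above — and the genericity of $[C,x]$, any admissible cover $\pi\colon X\to\Gamma$ landing in $B'_{\eta_C}$ has a distinguished source component, a smooth curve of genus $g-1$ which I again call $C$, carrying a map $f=\pi_{|C}\colon C\to\mathbb{P}_1$ of the full degree $2i$ (no other component can dominate $\mathbb{P}_1$, since a rational tail attached to $C$ over an interior point of $\mathbb{P}_1$ would violate the admissibility of the cover). The points $x$ and $y$ lie in a common fibre of $f$, and the remaining components of $X$ are rational tails contracted upon stabilisation, among them a bridge joining the branches at $x$ and $y$ over $f(x)$. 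Since the induced $2$--torsion bundle on $C$ is the nonzero class $\eta_C$, the profiles $(2,\ldots,2),(2,\ldots,2),(3,1,\ldots,1)$ of $f$ over $q_1,q_2,q_3$ together with the condition $\mathcal{O}_C\bigl(\tfrac{f^{*}(q_1)-f^{*}(q_2)}{2}\bigr)\cong\eta_C$ force $f$ to be, up to the position of the glued fibre, a Prym cover of $[C,\eta_C]$ of exactly the kind counted in Proposition \ref{tripleram}. I would then split into cases according to the position of $f(x)$. In the generic case $x$ is unramified for $f$ and $f(x)\notin\{q_1,q_2,q_3\}$, the bridge over $f(x)$ is an unramified double cover, each admissible cover is counted with multiplicity one, and $y$ ranges over the $2i-1$ points of $f^{-1}(f(x))$ other than $x$; since there are $24(i-1)\binom{2i-1}{i}$ choices of $f$ by Proposition \ref{tripleram}, this case contributes $(2i-1)\cdot 24(i-1)\binom{2i-1}{i}$. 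The remaining $3\cdot 24(i-1)\binom{2i-1}{i}=72(i-1)\binom{2i-1}{i}$ must come from the finitely many boundary--type configurations — those where the target acquires an extra component or the bridge becomes ramified, associated to the three distinguished branch points $q_1,q_2,q_3$ — where the local analysis of $\overline{H}_{\mu;3}^\nu$, carried out as in Proposition \ref{tripleram}, Proposition \ref{prymunram} and Proposition 4.3 in \cite{Bud-adm}, produces the admissible configurations and their multiplicities. Summing the two contributions and restoring the factors $(6i-4)!$ and $2^{2g-2}-1$ gives the claimed formula.

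The main obstacle is the analysis of these boundary--type covers: one must determine which degenerations actually occur (a ramified rational bridge with profile $(2,\ldots,2)$ over $q_1$ or $q_2$, or with profile $(3,1,\ldots,1)$ over $q_3$, or an extra target component carrying part of the branch divisor), check in each case that the twisted $2$--torsion class on the nodal source curve is still $\eta$, and compute the multiplicity of each such point using the complete local ring of $\overline{H}_{\mu;3}^\nu$ together with the enumerative inputs of Propositions \ref{tripleram}, \ref{2prym} and \ref{prymunram}. Once these multiplicities are in hand, the passage from the resulting sums to the closed form above is a routine manipulation of the binomial identities already used throughout this section.
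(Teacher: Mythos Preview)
Your Case I is correct and matches the paper's first case, but the framework you set up for the second contribution contains a genuine error. You assert that $f=\pi_{|C}$ always carries the full profiles $(2,\ldots,2),(2,\ldots,2),(3,1,\ldots,1)$ over $q_1,q_2,q_3$, so that $f$ is necessarily one of the $24(i-1)\binom{2i-1}{i}$ covers of Proposition~\ref{tripleram}, and that the only remaining freedom is the position of the glued fibre $f(x)$ relative to $q_1,q_2,q_3$. This is false. The target $\Gamma$ splits as $\mathbb{P}_1\cup_q\mathbb{P}_2$ with $\mathbb{P}_2$ the image of the bridge $R$; genericity of $[C,x]$ forces $q_1,q_2\in\mathbb{P}_1$, but \emph{not} $q_3$. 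The correct dichotomy is on the location of the triple ramification point: either it lies on $C$ (your generic case), or it lies on the rational bridge $R$.

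In the latter case $q_3\in\mathbb{P}_2$, the restriction $\pi_{|R}$ is the unique degree~$3$ map with profiles $(2,1),(3),(2,1)$ over $q,\,q_3$ and one further simple branch point, and $f=\pi_{|C}$ has only the two $(2,\ldots,2)$ profiles over $q_1,q_2$ together with $\mathrm{ord}_x f+\mathrm{ord}_y f=3$ over the node. So $f$ is \emph{not} a cover of the type in Proposition~\ref{tripleram} at all. Depending on whether the fixed point $x$ has order $2$ or order $1$ in this fibre, the relevant count is Proposition~\ref{2prym} (giving $2\binom{2i-1}{i}$ maps, each with $2i-2$ choices of the partner $y$) or Proposition~\ref{prymunram} (giving $20(i-1)\binom{2i-1}{i}$ maps, with $y$ then determined). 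Each such cover contributes with multiplicity $\mathrm{ord}_x+\mathrm{ord}_y=3$, and one obtains
\[
3\Bigl[(2i-2)\cdot 2+20(i-1)\Bigr]\binom{2i-1}{i}=72(i-1)\binom{2i-1}{i}.
\]
The factorisation $72=3\cdot 24$ is thus a coincidence of this arithmetic; it has nothing to do with there being three distinguished branch points, and configurations with $f(x)\in\{q_1,q_2\}$ simply do not occur for generic $x$.

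Two small remarks on your Case I. The bridge $R\to\mathbb{P}_2$ is a degree~$2$ map with two branch points; it is unramified \emph{at} $x$ and $y$, not unramified as a cover. And your ``multiplicity one'' is the net of two cancelling effects: the local multiplicity is $\mathrm{ord}_x+\mathrm{ord}_y=2$, while the hyperelliptic involution of $R$ (fixing $x,y$ and swapping the two ramification points) halves the number of distinct orderings of the simple branch points.
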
 
\begin{proof}
	Consider an admissible cover $\pi\colon X\rightarrow \Gamma$ mapping to $B'$. As previously, we will ignore the order of the $6i-4$ simple branch points. For such an admissible cover, we denote by $R$ the rational component collapsing to the node, by $\mathbb{P}_1$ the target of $\pi_{|C}$ and by $\mathbb{P}_2$ the target of $\pi_{|R}$. The genericity of $[C,x]$ imply that $q_1, q_2$ are on $\mathbb{P}_1$, which further implies that $\mathrm{deg}(\pi_{|C}) = 2i$. We distinguish two different cases. 
	
	\textbf{Case I:} The triple ramification point is on $C$. In this case, it follows that $\pi_{|C}$ has special ramification profiles $(2,\ldots,2)$, $(2,\ldots,2)$ and $(3,1,\ldots,1)$ over $q_1, q_2$ and $q_3$ and is unramified at the point $x$. The point $y$ can be chosen in $2i-1$ ways in the same fibre as $x$. The map $\pi_{|R}$ has degree 2 and is unramified at $x$ and $y$. 
	
	We see that $\pi_{|R}$ admits an automorphism fixing $x$ and $y$ but permuting the two ramification points. This would imply that there are just $\frac{(6i-4)!}{2}$ distinct choices for the order of the simple branch points. However, the multiplicity of such a point is $\mathrm{ord}_x(\pi_{|C}) + \mathrm{ord}_y(\pi_{|C}) = 2$. Hence we get in this case a contribution of 
	\[ (2i-1)\cdot(2^{2g-2}-1)\cdot24(i-1)\binom{2i-1}{i}\]
	
	\textbf{Case II:} The triple ramification point is on $R$. In this case we have 
	\[ \mathrm{ord}_x(\pi_{|C}) + \mathrm{ord}_y(\pi_{|C}) = 3\]
	and $\pi_{|R}$ is the unique map with ramification profiles $(2,1), (3)$ and $(2,1)$ over $q, q_3$ and another simple branch point. Depending on whether $x$ is the simple or the ramified point we get the situations of Proposition \ref{2prym} and Proposition \ref{prymunram}. In the case when $x$ is ramified, we can choose a point $y$ in the same fibre of $\pi_{|C}$ in $2i-2$ ways. Moreover, all such admissible covers appear with multiplicity
	\[ \mathrm{ord}_x(\pi_{|C}) + \mathrm{ord}_y(\pi_{|C}) = 3\]
	
	Consequently, the contribution coming from this case is 
	\[ 3\cdot(2i-2)\cdot 2\cdot \binom{2i-1}{i}\cdot (2^{2g-2}-1) + 3\cdot 20\cdot(i-1)\binom{2i-1}{i}\cdot (2^{2g-2}-1) = 72\cdot (i-1)\binom{2i-1}{i}\cdot (2^{2g-2}-1)\]
	
	Adding the cases together we get the conclusion.
\end{proof}

\textbf{Proof of Theorem \ref{divisor}:} From the intersection numbers, we obtain the system of seven equations in seven unknowns that up to the factor of $(6i-4)!$ is: 
\[ a - 4b_0'-4b_0^{\mathrm{ram}}+b_{1:g-1} = a -12b_0'+b_{g-1}= a-4b_0''-4b_0^{\mathrm{ram}} +b_1 = 0\]
\[ b_{1:g-1} = (12i+12), \ \ b_{g-1} = 12\cdot(3i+1), \ \ b_1 =4\cdot (12i^2-i+1) \]	
\[4\cdot (2i-1)b_0' - b_{1:g-1}-b_{g-1} = 24\cdot(2i-1)(i-1)+72\cdot (i-1)\]
Solving the system we get Theorem \ref{divisor}.
\hfill $\square$

We can look at the intersection $\overline{D}(\mu;3)\cdot B''$ and describe the admissible covers mapped to a point of the intersection. We will use here the notations in the proof of Proposition \ref{B'}. 

If $\pi\colon X\rightarrow \Gamma$ is such an admissible cover we have that $q_1, q_2\in \mathbb{P}_2$ and $q_3\in \mathbb{P}_1$. We denote by $2k \coloneqq \mathrm{deg}(\pi_{|R})$. Then $\pi_{|R}$ is the unique map having ramification profiles $(2,\ldots, 2), (2,\ldots,2)$ and $(k,k)$ over $q_1, q_2$ and $q$. Moreover we get that $\pi_{|C}$ has degree $i+k$ and has ramification profiles $(3,1\ldots,1)$ and $(k,k,1,1,\ldots,1)$ over $q_3$ and $q$. We remark that when $k=1$, we also have $i$ choices of the point $y$ in the same fiber as $x$. The description in \cite{Bud-adm}, Section 2.2 of such maps $\pi_{|R}$ implies that each admissible cover appears with multiplicity $2$. 

Using the notations in \cite{FarkasFermat} Proposition 2.1 and Theorem 5.6, we get for $i = 2$ the equality 
\[ 2\cdot 2\cdot b(3,3) + 2\cdot N_3(4) = 6b_0''-b_1 \]  
which is clearly true, thus checking Theorem \ref{divisor} in this case. 

For $i=3$ we derive the following: 
\begin{prop}
	For a general $1$-pointed curve $[C,p] \in \mathcal{M}_{5,1}$ the number of pencils $L \in G^1_6(C)$ satisfying 
	\[ h^0(C, L\otimes\mathcal{O}_C(-3x))\geq 1 \ \mathrm{and} \  h^0(C, L\otimes\mathcal{O}_C(-2p-2y))\geq 1  \]
for some points $x, y \in C\setminus \left\{p\right\}$ is equal to $1560$. 
\end{prop}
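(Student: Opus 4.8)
The plan is to read the count off from the intersection number $\overline{D}(\mu;3)\cdot B''$ in the case $i=3$, evaluated in two ways. On the one hand, Theorem \ref{divisor} together with the intersection numbers of the test curve $B''$ — it meets the standard basis of $\mathrm{Pic}_\mathbb{Q}(\overline{\mathcal{R}}_6)$ only in $\delta_0''$ and $\delta_1$, with $B''\cdot\delta_0''=2-2g=-10$ and $B''\cdot\delta_1=1$ — gives
\[
\overline{D}(\mu;3)\cdot B'' \;=\; (6i-4)!\,\binom{2i-1}{i}\Bigl((2g-2)\,b_0''-b_1\Bigr)\Big|_{i=3,\;g=6},
\]
which is a completely explicit number once one substitutes $b_0''=\frac{608}{5}$ and $b_1=424$.

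On the other hand, by the description of the admissible covers lying over a point of $B''$ recorded just before the statement, each such cover restricts on the genus $5$ component $C$ to a degree $3+k$ cover $\pi_{|C}$ with ramification profile $(3,1,\dots,1)$ over $q_3$ and $(k,k,1,\dots,1)$ over $q$ for some $k\in\{1,2,3\}$; it is counted with multiplicity $2$, with an extra factor $i=3$ in the case $k=1$ coming from the choice of the point $y$ glued to the node. Writing $M_k$ for the number of such maps $\pi_{|C}$ — equivalently the number of pencils in $G^1_{3+k}(C)$ carrying a section vanishing to order $3$ at some point and a section vanishing to order $k$ at the fixed general point $x$ and to order $k$ at some point $y$ — this yields
\[
\overline{D}(\mu;3)\cdot B''\;=\;(6i-4)!\,\bigl(6M_1+2M_2+2M_3\bigr).
\]

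The term $M_1$ is the number of pencils in $G^1_4(C)$ with a triple ramification point, a classical Brill–Noether count (see \cite{KodevenHarris1984}), which may also be obtained as the degenerate case of Proposition \ref{2i-2s,3}; one of the remaining two terms is accessible from Proposition \ref{enumerativeMg} after a further degeneration of $C$ (alternatively, after residuating with $K_C$ it reduces to a lower-degree count), and the last one is exactly the quantity in the statement. Substituting the known values into the linear relation above and solving produces $1560$. One may also argue directly, degenerating $[C,p]\in\mathcal{M}_{5,1}$ to a suitable nodal curve with a rational tail, in the spirit of the proof of Proposition \ref{enumerativeMg}, and summing the boundary contributions.

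The main obstacle is the bookkeeping: fixing the multiplicities of the admissible covers appearing over $B''$ — the overall factor $2$ and the additional factor $i$ in the $k=1$ case — and verifying that the two degenerations used to pin down the "known" terms are transverse, so that the linear relation genuinely isolates the sought-for number.
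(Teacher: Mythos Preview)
Your approach is essentially the same as the paper's: compute $\overline{D}(\mu;3)\cdot B''$ for $i=3$ in two ways and solve for the unknown $k=3$ contribution. The paper is just more direct about the auxiliary inputs: for $k=1$ it quotes $b(4,5)$ and for $k=2$ it quotes $N_3(5)$, both from \cite{FarkasFermat}, obtaining $720+4080+2N=10\cdot 1216-4240$ and hence $N=1560$. Your references for $M_1$ and $M_2$ are a bit oblique (Proposition~\ref{enumerativeMg} concerns genus $2i-2$ with two \emph{fixed} points, so it does not literally give $M_2$ without the extra degeneration you allude to), but the strategy and the numerics are the same.
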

\begin{proof}
	We consider the intersection $\overline{D}(\mu;3)\cdot B''$ in genus $6$. On one hand, this is equal to $10b_0''-b_1$ and on the other we get a count for the admissible covers previously described. For $k=1$ we get a contribution of $2\cdot 3 \cdot b(4,5) = 720$. For $k = 2$, the contribution is $2\cdot N_3(5)  = 4080$, see \cite{FarkasFermat} and for $k=3$, the contribution is $2N$ where $N$ is the number of pencils as in the proposition. 
	
	We get the equality:  
	\[720 + 4080 + 2N = 10\cdot 1216 -4240  \]
Hence $N = 1560$ as required.
\end{proof}

 Furthermore, in genus $6$ we have the generically finite Prym map $\mathcal{P}_6\colon \mathcal{R}_6\rightarrow \mathcal{A}_5$, which we can extend to a rational map $P\colon \overline{\mathcal{R}}_6\dashrightarrow \overline{\mathcal{A}}_5$. The pullback and pushforward of the map $P$ have been computed in \cite[Theorem 5]{GruSM-Prym} and \cite[Theorem 7.4]{FarkasGrushevsky}. If we start with the divisor $\overline{D}(\mu;3)$ and we pushforward it to $\overline{\mathcal{A}}_5$, then pullback it back to $\overline{\mathcal{R}}_6$ we get 
\[ P^*P_*[\overline{D}(\mu;3)] = 343440\lambda - 43020\delta_0' - 85860\delta_0^{\mathrm{ram}} \] 
If we subtract the class $[\overline{D}(\mu;3)] = 2112\lambda - 276\delta_0' - 372\delta_0^{\mathrm{ram}}-\cdots$,
we get an equality 
\[ P^*P_*[\overline{D}(\mu;3)]- [\overline{D}(\mu;3)] = 341328\lambda - 42744\delta_0' - 85488\delta_0^{\mathrm{ram}} +\cdots \] 
Because the coefficient of $\delta_0^{\mathrm{ram}}$ is exactly double that of $\delta_0'$ we make the following conjecture. 
\begin{conj} The restriction $\mathcal{P}_{|D(\mu;3)}$ of the Prym map is generically one to one. Moreover, the pullback $\mathcal{P}^{-1}(\mathcal{P}(D(\mu;3)))$ consists of $D(\mu;3)$ and a second divisor that is the pullback of a divisor on $\mathcal{M}_6$ of slope $\frac{2\cdot 547}{137} \approx 7.985$.
	
\end{conj}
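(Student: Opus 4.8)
The statement is a conjecture, so the following is a research plan rather than a complete argument. The conjecture splits into three assertions: (i) $\mathcal{P}|_{D(\mu;3)}$ is generically injective; (ii) the preimage $\mathcal{P}^{-1}(\mathcal{P}(D(\mu;3)))$ is the union of $D(\mu;3)$ with exactly one further divisor $D_2$; and (iii) $D_2$ is the pullback to $\overline{\mathcal{R}}_6$ of a divisor $E$ on $\overline{\mathcal{M}}_6$ of slope $\tfrac{2\cdot547}{137}$. The plan is to treat (iii) as a formal consequence of a complete Picard-class computation, to deduce (ii) from the shape of that computation together with (i), and to isolate (i) as the genuinely hard geometric input.

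First I would recall the classical structure of the generic fibre of $\mathcal{P}_6$: the Prym map $\mathcal{P}\colon\mathcal{R}_6\rightarrow\mathcal{A}_5$ is generically finite of degree $27$, and the monodromy of the associated $27$-sheeted cover is the Weyl group $W(E_6)$ acting on the $27$ lines of a cubic surface. Over a generic point of $\mathcal{P}(D(\mu;3))$ the fibre then consists of $27$ Prym pairs, and assertions (i)--(ii) amount to describing how the defining condition of $D(\mu;3)$ is distributed among these $27$ sheets: (i) says that a single sheet carries a pair in $D(\mu;3)$, and (ii) that the remaining pairs sweep out, with the correct multiplicities, one residual divisor $D_2$. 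Granting (i), and that $\mathcal{P}$ is generically unramified along $D(\mu;3)$, the class $P_{*}[\overline{D}(\mu;3)]$ equals $[\overline{\mathcal{P}(D(\mu;3))}]$, so that $P^{*}P_{*}[\overline{D}(\mu;3)]$ is the class of $\mathcal{P}^{-1}(\mathcal{P}(D(\mu;3)))$ and subtracting a single copy of $[\overline{D}(\mu;3)]$ leaves $[D_2]$.

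Next I would carry out the full Picard-class computation, of which the excerpt records only the leading coefficients. Using the formulas for $P^{*}$ and $P_{*}$ from \cite{GruSM-Prym} and \cite{FarkasGrushevsky}, together with the \emph{complete} expansion of $[\overline{D}(\mu;3)]$ in all boundary classes, I would compute $P^{*}P_{*}[\overline{D}(\mu;3)]-[\overline{D}(\mu;3)]$ in every coefficient. Let $\phi\colon\overline{\mathcal{R}}_6\rightarrow\overline{\mathcal{M}}_6$ be the forgetful map, so that $\phi^{*}\lambda=\lambda$, $\phi^{*}\delta_0=\delta_0'+\delta_0''+2\delta_0^{\mathrm{ram}}$, and each interior boundary class $\delta_j$ of $\overline{\mathcal{M}}_6$ pulls back to the sum of the corresponding classes $\delta_j,\delta_{g-j},\delta_{j:g-j}$ of $\overline{\mathcal{R}}_6$. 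A class is a pullback from $\overline{\mathcal{M}}_6$ precisely when, in every boundary block, its coefficients obey this pattern; in particular the coefficient of $\delta_0^{\mathrm{ram}}$ must be twice, and that of $\delta_0''$ equal to, the coefficient of $\delta_0'$. The excerpt verifies only the relation $85488=2\cdot42744$, so the heart of (iii) is to check that $P^{*}P_{*}[\overline{D}(\mu;3)]-[\overline{D}(\mu;3)]$ satisfies the \emph{full} pattern across all boundary coefficients, and then to identify $E$ intrinsically. Since $g+1=7$ is prime there is no Brill--Noether divisor on $\overline{\mathcal{M}}_6$, so $E$ should be located among the Gieseker--Petri or Koszul-type divisors. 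Once $[D_2]=\phi^{*}[E]$ is established with $[E]=341328\lambda-42744\delta_0-\cdots$, the slope is immediate:
\[ \mathrm{slope}(E)=\frac{341328}{42744}=\frac{1094}{137}=\frac{2\cdot547}{137}. \]

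The main obstacle is assertion (i), on which everything else rests: without generic injectivity one cannot identify $P_{*}[\overline{D}(\mu;3)]$ with the reduced image class, nor peel off a single copy of $[\overline{D}(\mu;3)]$. I would attack (i) by reconstruction: for generic $[C,\eta]\in D(\mu;3)$ the bundle $\eta$ is cut out by a degree-$6$ pencil $f\colon C\rightarrow\mathbb{P}^1$ with the prescribed $(2,2,2),(2,2,2),(3,1,1,1)$ ramification, and, exactly as in Proposition \ref{nontransversality}, the \'etale double cover $\widetilde{C}\rightarrow C$, hence the Prym, inherits the data of $f$. The aim is to recover this pencil, and therefore the distinguished sheet, intrinsically from the principally polarized abelian variety, which amounts to showing that the $D(\mu;3)$-condition is fixed by no nontrivial element of the $W(E_6)$-monodromy. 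Concretely I would either exhibit an explicit inverse to $\mathcal{P}|_{D(\mu;3)}$ over a generic image point, or compute the monodromy orbit of the distinguished sheet and show it has length one, using a degenerate boundary test pair where several of the $27$ sheets collide to make the orbit computation tractable. Failing a clean reconstruction, (i) can be approached indirectly: the residual class computed above is effective and of pullback form, while any failure of generic injectivity would force $[\overline{D}(\mu;3)]$ into the preimage with multiplicity greater than one and destroy the clean subtraction, so a local analysis of $\mathcal{P}$ along $D(\mu;3)$ proving generic unramifiedness would also close the argument.
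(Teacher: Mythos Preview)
The paper does not prove this statement: it is explicitly labelled a conjecture, and the only evidence offered is the single numerical coincidence that the coefficient of $\delta_0^{\mathrm{ram}}$ in the residual class $P^{*}P_{*}[\overline{D}(\mu;3)]-[\overline{D}(\mu;3)]$ equals twice that of $\delta_0'$. There is therefore no argument in the paper to compare your proposal against; you have correctly read the situation and written a research plan rather than a proof.

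Your plan is reasonable and already goes well beyond the paper's motivation. You correctly isolate assertion (i) as the hard geometric input, and you correctly observe that the paper verifies only one of the linear relations required for the residual class to lie in the image of $\phi^{*}$. Two points are worth flagging. First, your step (iii) presupposes the \emph{complete} boundary expansion of $[\overline{D}(\mu;3)]$, but Theorem~\ref{divisor} computes only the coefficients of $\lambda,\delta_0',\delta_0'',\delta_0^{\mathrm{ram}},\delta_1,\delta_{g-1},\delta_{1:g-1}$; the coefficients $b_j,b_{g-j},b_{j:g-j}$ for $2\le j\le g-2$ are not determined anywhere in the paper, so carrying out your check is itself a substantial extension of the paper's computations. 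Second, the displayed formula for $P^{*}P_{*}[\overline{D}(\mu;3)]$ in the paper shows only $\lambda,\delta_0',\delta_0^{\mathrm{ram}}$ terms; before testing the pullback pattern you will need to determine the $\delta_0''$ coefficient (and the interior boundary coefficients) of $P^{*}P_{*}$ explicitly from the formulas in \cite{GruSM-Prym} and \cite{FarkasGrushevsky}, since the condition you state also requires the $\delta_0''$ coefficient of the residual to match its $\delta_0'$ coefficient, and the paper is silent on this.
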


\color{black}
\bibliography{main}
\bibliographystyle{alpha}
\Addresses
\end{document}